
\documentclass{amsart}
\usepackage{hyperref}

\usepackage{enumerate}
\usepackage{enumitem}
\setitemize{leftmargin=1cm}
\setenumerate{leftmargin=7mm}
\unitlength1cm
\newcommand{\arxiv}[1]{\href{http://arxiv.org/abs/#1}{arXiv:#1}}
\newcommand*{\mailto}[1]{\href{mailto:#1}{\nolinkurl{#1}}}

\newtheorem{theorem}{Theorem}[section]

\newtheorem{lemma}[theorem]{Lemma}
\newtheorem{corollary}[theorem]{Corollary}

\newtheorem*{proposition*}{Proposition}

\theoremstyle{definition}
\newtheorem{remark}[theorem]{Remark}

\newcommand{\R}{\mathbb{R}}

\newcommand{\N}{\mathbb{N}}
\newcommand{\C}{\mathbb{C}}

\newcommand{\F}{\mathcal{F}}
\newcommand{\RR}{\mathcal{R}}

\newcommand{\be}{\begin{equation}}
\newcommand{\ee}{\end{equation}}
\newcommand{\beq}{\begin{equation}}
\newcommand{\eeq}{\end{equation}}
\newcommand{\bea}{\begin{eqnarray}}
\newcommand{\eea}{\end{eqnarray}}

\newcommand{\ol}{\overline}
\newcommand{\pa}{\partial}
\newcommand{\ti}{\tilde}

\newcommand{\id}{\mathbb{I}}
\newcommand{\I}{\mathrm{i}}
\newcommand{\E}{\mathrm{e}}

\newcommand{\re}{\mathop{\mathrm{Re}}}

\newcommand{\noprint}[1]{}

\makeatletter
\newcommand{\dlmf}[1]{%
\cite[%
  \def\nextitem{\def\nextitem{, }}%
  \@for \el:=#1\do{\nextitem\href{http://dlmf.nist.gov/\el}{(\el)}}%
]{dlmf}%
}
\makeatother


\newcommand{\la}{\lambda}


\numberwithin{equation}{section}



\begin{document}
\title[Soliton asymptotics for KdV shock waves]{Soliton asymptotics for KdV shock waves\\ via classical inverse scattering}

\author[I. Egorova]{Iryna Egorova}
\address{B. Verkin Institute for Low Temperature Physics and Engineering\\ 47, Nauky ave\\ 61103 Kharkiv\\ Ukraine}
\email{\href{mailto:iraegorova@gmail.com}{iraegorova@gmail.com}}

\author[J. Michor]{Johanna Michor}
\address{Faculty of Mathematics\\ University of Vienna\\
Oskar-Morgenstern-Platz 1\\ 1090 Wien\\ Austria}
\email{\href{mailto:Johanna.Michor@univie.ac.at}{Johanna.Michor@univie.ac.at}}
\urladdr{\href{http://www.mat.univie.ac.at/~jmichor/}{http://www.mat.univie.ac.at/\string~jmichor/}}

\author[G. Teschl]{Gerald Teschl}
\address{Faculty of Mathematics\\ University of Vienna\\
Oskar-Morgenstern-Platz 1\\ 1090 Wien\\ Austria\\ and Erwin Schr\"odinger International
Institute for Mathematics and Physics\\ Boltzmanngasse 9\\ 1090 Wien\\ Austria}
\email{\href{mailto:Gerald.Teschl@univie.ac.at}{Gerald.Teschl@univie.ac.at}}
\urladdr{\href{http://www.mat.univie.ac.at/~gerald/}{http://www.mat.univie.ac.at/\string~gerald/}}

\keywords{KdV equation,  shock wave, inverse scattering transform,  solitons}
\subjclass[2020]{Primary 37K40, 35Q53; Secondary 37K45, 35Q15}
\thanks{Research supported by the Austrian Science Fund (FWF) under Grant No.\ P31651.}

\begin{abstract}
We show how the inverse scattering transform can be used as a convenient tool to derive the
long-time asymptotics of shock waves for the Korteweg--de Vries (KdV) equation in the soliton
region. In particular, we improve the results previously obtained via the nonlinear
steepest decent approach both with respect to the decay of the initial conditions as well as
the region where they are valid.
\end{abstract}

\maketitle

\section{Introduction and main results}

Presently the most common method for studying the long-time asymptotic behavior of solutions of completely integrable nonlinear wave equations is 
the nonlinear steepest descent (NSD) method introduced by Deift and Zhou \cite{DZ} extending ideas of Manakov \cite{ma} and Its \cite{its1}.
In particular, this method has superseded the inverse scattering transform (IST) originally used in this context. The purpose of the present note
is to address two issues arising in the context of the NSD method and show how they can be handled, as we feel, more effectively using the IST, at least
in the soliton region. Hence it should be emphasized that the main contribution of this note is not merely the improvement of previous results but the
techniques which lead to these improvements. Clearly these techniques are not limited to our chosen example but apply to other integrable wave equations
solvable via the IST equally well.

The first item is the fact that a common assumption in the context of the NSD method is exponential decay of the initial condition such that the scattering data
allow for an analytic continuation to a neighborhood of the spectrum. Of course this assumption can be weakened by using an analytic approximation of
the scattering data and this was already demonstrated in \cite{DZ}, where Schwartz-type initial conditions were considered. However, it should be pointed out that this technique
is not limited to Schwartz data and can in principle also handle weaker conditions of decay (see \cite{GT} for the case of the KdV equation with decaying initial data). All of this can be considered
well-understood. Nevertheless, working out all steps in full detail requires considerable effort and hence is frequently skipped.

The second item is the fact that the NSD method produces the asymptotics in sectors. While this is of course natural, since the asymptotics are different for different regions,
it still has the drawback that the different sectors usually do not overlap. In particular, we are not aware of an analysis which produces the asymptotics of the KdV
equation, in the simplest case of decaying initial condition, in overlapping domains which cover the entire $x/t$-plane. In this context our second contribution is
that we can establish soliton asymptotics in a regime which is larger than what could previously be covered using the NSD method.

In this vein, the aim of our note is to show the benefits of the classical IST in soliton domains for the study of the KdV equation
\be\label{kdv} 
q_t(x,t) - 6 q(x,t)q_x(x,t)+q_{xxx}(x,t)=0,
\ee
with step-like initial data $q(x)=q(x,0)$ satisfying the condition
\be\label{ini1}
q(x)\to\begin{cases} \begin{array}{ll} 0, &\mbox{as}\ x\to +\infty,\\
-c^2, &\mbox{as}\ x\to -\infty,\end{array}\end{cases}
\ee
 where $c>0$. The long-time asymptotics of solutions of \eqref{kdv}, \eqref{ini1}
 were derived in \cite{EGKT} using the vector Riemann--Hilbert approach,  under the assumption of exponential decay of the initial data to the background constants in \eqref{ini1}. We want to weaken this condition to polynomial decay
 and, at the same time, cover a larger region. Specifically, we will derive the asymptotics for
\[
x \geq 4c^2t + \frac{m_0-\frac 3 2 -\varepsilon}{2c}\,\log t, \quad \varepsilon >0, \quad m_0 \geq 3,
\]
whereas the asymptotics obtained via NSD in \cite{EGKT} only hold for 
\[x\geq 4c^2 t + \varepsilon t, \quad \varepsilon >0.
\]
Our second aim was to find minimal restrictions on the steplike initial data  for which the 
associated  Riemann--Hilbert problem (RHP) is well-posed. When stating the RHP for a nonlinear equation, 
one assumes that its solution for each $t$ has a behavior that guarantees the existence of Jost solutions. 
In our case, it means that the solution $q(x,t)$ to the initial-value problem \eqref{kdv}--\eqref{ini1} has to have finite first moments of perturbation. If this is not established 
beforehand, then the respective RHP statement is conditional, and one has to prove that the RHP has a solution. 
These minimal restrictions on the initial data to achieve existence of a unique classical solution are the content of Theorem~\ref{thoern}. 

Our main result is the following. 
Denote by $C^n(\R)$ the set of functions $x \in \R \mapsto q(x) \in \R$ which have $n$ continuous derivatives with respect to $x$
and let $C^{n,k}(\R^2)$ be the set of functions $(x,t) \in \R^2 \mapsto q(x,t) \in \R$ which have $n$ continuous derivatives with respect to $x$ 
and $k$ continuous derivatives with respect to $t$. Denote the discrete spectrum of the associated scattering problem by 
$-\kappa_N^2<\dots<-\kappa_1^2$ and the inverse of the norm of the right eigenfunctions by 
$\gamma_j$, $j=1,\dots,N$ (compare Sec.\ \ref{S:scattering}).

\begin{theorem} \label{maintheor1} Let $m_0\geq 3$, $n_0\geq 2$ be arbitrary natural numbers and let $\varepsilon>0$ be arbitrary small. Assume that $q(x)\in C^{n_0}(\R)$ satisfies
\be\label{inithor}\int_0^{+\infty} (1 +|x|^{m_0})\left(\left|\frac{d^n}{d x^n}q(x)\right| +\left|\frac{d^n}{d x^n}(q(-x)+c^2)\right|\right)dx<\infty,\quad  0\leq n\leq n_0.\ee Let $q(x,t)$ be the solution of \eqref{kdv}, \eqref{inithor}. 
Then the asymptotics of $q(x,t)$ in the  region 
\[x \geq 4c^2t + \frac{m_0-\frac 3 2 - \varepsilon}{2c}\,\log t\]  
is the following as $t\to\infty$:
\be \label{asmain}
q(x,t) 
=-\sum_{j=1}^N\frac{2\kappa_j^2}{\cosh^2\big(\kappa_j x - 4\kappa_j^3 t -\frac{1}{2}\log\frac{\gamma_j^2}{2\kappa_j}-\sum_{i=j+1}^N\log\frac{\kappa_j - \kappa_i}{\kappa_i + \kappa_j}\big)} + O\bigg(\frac{1}{t^{m_0-\frac{3}{2}-\varepsilon}}\bigg).
\ee
\end{theorem}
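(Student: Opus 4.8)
The plan is to run the classical inverse scattering transform through the right Marchenko (Gelfand--Levitan) equation and to exploit that, in the chosen region, the continuous spectrum contributes only a controllable error while the discrete spectrum reproduces the solitons. Recall that under the KdV flow the right scattering data evolve explicitly: the eigenvalues $-\kappa_j^2$ stay fixed, the norming constants satisfy $\gamma_j^2(t)=\gamma_j^2\,\E^{8\kappa_j^3 t}$, and the reflection coefficient becomes $R(k,t)=R(k)\,\E^{8\I k^3 t}$ (with $\lambda=k^2$). Writing the reconstruction as $q(x,t)=-2\frac{d}{dx}K(x,x,t)$, where
\be
K(x,y,t)+F(x+y,t)+\int_x^\infty K(x,z,t)\,F(z+y,t)\,dz=0,\qquad y>x,
\ee
I would split the kernel into its discrete and continuous parts, $F=F_d+F_c$, with
\be
F_d(\xi,t)=\sum_{j=1}^N \gamma_j^2(t)\,\E^{-\kappa_j\xi},
\ee
and $F_c$ collecting the integral over the continuous spectrum $[-c^2,\infty)$. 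Because the eigenvalues lie below the bottom $-c^2$ of the essential spectrum, one has $\kappa_j>c$, so every soliton travels with speed $4\kappa_j^2>4c^2$ and hence sits inside the region under consideration.

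For the reflectionless problem $F=F_d$ the kernel is of finite rank $N$, so the Marchenko equation reduces to a linear algebraic system solved by Cramer's rule; the resulting $q_{\mathrm{sol}}(x,t)=-2\frac{d}{dx}K_d(x,x,t)$ is the exact $N$-soliton. In the region $x\geq 4c^2t+\frac{m_0-3/2-\varepsilon}{2c}\log t$ the distinct velocities $4\kappa_j^2$ force the solitons apart at a linear rate, so the exact $N$-soliton separates into the sum
\be
-\sum_{j=1}^N\frac{2\kappa_j^2}{\cosh^2\big(\kappa_j x-4\kappa_j^3 t-\tfrac12\log\tfrac{\gamma_j^2}{2\kappa_j}-\sum_{i=j+1}^N\log\tfrac{\kappa_j-\kappa_i}{\kappa_i+\kappa_j}\big)}
\ee
up to an exponentially small remainder, the constant shift of the $j$-th soliton being the accumulated interaction phase with the faster solitons $i>j$. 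I would obtain this by tracking the determinants in the separated limit.

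The heart of the argument is to bound the contribution of $F_c$ uniformly in the region, and here I would decompose it along the spectrum. Over $\lambda=k^2\geq0$ the kernel is the oscillatory integral $\frac{1}{2\pi}\int_\R R(k)\,\E^{8\I k^3 t+\I k\xi}dk$ with $\xi\geq 2x$; its phase $\phi=8k^3t+k\xi$ has $\phi'=24k^2t+\xi\geq\xi$ and is therefore free of stationary points. Since only polynomial decay of $q$ is assumed, $R$ is not analytic and contours cannot be deformed; instead the $m_0$ moments give $R\in C^{m_0}$ while the $n_0$ derivatives give decay of $R$ at infinity, so repeated integration by parts---each step gaining a factor of order $1/\phi'=O(1/t)$---renders this part $O(t^{-m_0})$, which is negligible. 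The genuinely dominant piece is the band $\lambda\in(-c^2,0)$: writing $\lambda=-\tau^2$, $\tau\in(0,c)$, this contribution carries the real exponential $\E^{8\tau^3t-\tau\xi}$, so on the diagonal $\xi=2x$ its exponent $E(\tau)=8\tau^3t-2\tau x$ is maximal at the band edge $\tau=c$. Evaluating at the boundary $x=4c^2t+\frac{m_0-3/2-\varepsilon}{2c}\log t$ gives $E(c)=-(m_0-\tfrac{3}{2}-\varepsilon)\log t$, i.e. $\E^{E(c)}=t^{-(m_0-3/2-\varepsilon)}$; the region is calibrated precisely so that the least-suppressed, band-edge part of the continuous spectrum produces exactly the claimed order, the role of the moments $m_0$ being to control the regularity and vanishing of the scattering data at the threshold $\lambda=-c^2$.

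Finally I would make the splitting rigorous by solving the full Marchenko equation perturbatively: the reflectionless operator $I+\mathbf{F}_d$ is boundedly invertible on the relevant weighted space, and the estimates above show that $\mathbf{F}_c$ is small in operator norm uniformly in the region, whence $K-K_d$, and therefore $q-q_{\mathrm{sol}}$, is $O(t^{-(m_0-3/2-\varepsilon)})$. Combining this with the soliton separation of the previous step yields \eqref{asmain}. The main obstacle is the sharp band-edge estimate: under merely polynomial decay one must first establish, using Theorem~\ref{thoern} for well-posedness, the precise regularity and decay of the right scattering data near the threshold $\lambda=-c^2$ (a square-root branch point of the background), and then extract the uniform bound down to the very boundary of the region, where the exponential suppression only barely wins. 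By contrast, the reflectionless computation and the non-stationary-phase estimate over $\lambda\geq0$ are comparatively routine.
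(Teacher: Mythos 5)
Your overall architecture coincides with the paper's (split the Marchenko kernel into $\hat F_d$ plus a continuous part, invoke the reflectionless formula and soliton separation of Lemma \ref{zak}, then invert $\id+\F$ perturbatively and treat the continuous part as a small perturbation), but the step that actually decides the theorem --- the bound on the continuous-spectrum contribution in the stated region --- rests on a false claim. On the band $\lambda=-\tau^2\in(-c^2,0)$ the exponent $E(\tau)=8\tau^3t-2\tau x$ is \emph{not} maximal at the edge $\tau=c$: since $E(0)=0$ while $E(c)\leq-(m_0-\tfrac{3}{2}-\varepsilon)\log t<0$, it is maximal at the interior threshold $\tau=0$, where there is \emph{no} exponential suppression at all. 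So the delicate point is $\lambda=0$, where the spectral multiplicity changes from two to one, and not the band edge $\lambda=-c^2$; the band edge is in fact the trivial part, handled in the paper by the sup-norm bound \eqref{estkl} on $h\in[c/2,c]$, which needs nothing beyond $\int|P|\,dh<\infty$ precisely because the region is calibrated to make $\E^{S}\leq t^{-(m_0-3/2-\varepsilon)}\E^{-c(\xi+y)/2}$ there. Near $\tau=0$, a pointwise bound (using $P(0)=0$, visible from the factor $\I h$ in \eqref{eq3.19}) yields at best $O(t^{-2})$, independently of $m_0$ --- already insufficient for $m_0\geq 4$ --- and it certainly does not follow from your ``maximal at $\tau=c$'' reasoning. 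To reach $t^{-(m_0-3/2-\varepsilon)}$ one must integrate by parts $m_0$ times near $\tau=0$, and that is exactly where your plan breaks down.

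The idea you are missing is that the band integral and the reflection integral cannot be estimated separately: integration by parts produces boundary terms at $h=0$ and at $k=0$ of orders $t^{-2},t^{-3},\dots$ which are generically nonzero individually but cancel \emph{pairwise between the two integrals}. This is the content of \eqref{eq3.21}--\eqref{eq3.22}, proved via the symmetry $V_k^{(s)}(0)=(-1)^sW_k^{(s)}(0)$ of \eqref{eq3.24}, and it is implemented in the proof by integrating the \emph{combined} quantity $I=F^{(2)}+\hat F_R$ of \eqref{11} by parts $m_0$ times with the weight $\psi=1/S_h$, followed by Young's inequality to trade decay in $t$ against decay in $\xi+y$, yielding \eqref{mainest}--\eqref{mainest1}. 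Your non-stationary-phase treatment of the $\lambda\geq0$ part over the whole $k$-line tacitly assumes that $R$, extended by $R(-k)=\overline{R(k)}$, is $C^{m_0}$ across $k=0$; for step-like backgrounds it is not --- its defect of smoothness at $k=0$ is exactly encoded by the band integrand $P$, which is what \eqref{eq3.22} expresses. Consequently the moments $m_0$ enter as smoothness of $R$ and $P$ at $\lambda=0$ (the number of admissible integrations by parts), not as ``regularity and vanishing of the scattering data at the threshold $\lambda=-c^2$'' as you assert; the square-root branch points $k_1=\pm c$ are an issue for the \emph{left} kernel $F_1$ in the well-posedness Theorem \ref{thoern}, not here. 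Two smaller omissions: the kernel estimate must also decay in $y$ (the factor $(\xi+y+1)^{-1/2-\varepsilon}$ in \eqref{nui}) for the operator $\mathcal G$ to be small on $L_2(\R_+)\cap C(\R_+)$, and since $q=-2\partial_x K(x,x,t)$ you additionally need the $\xi$-derivative estimates, which the paper obtains by differentiating the Neumann series as in \eqref{eq4.9}.
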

For convenience, we recall the precise result from \cite{EGKT}.

\begin{proposition*}[\cite{EGKT}]
Assume that the initial data $q(x)$ of the problem \eqref{kdv}--\eqref{ini1} belongs to $C^7(\R)$ and satisfies
\be\label{ini2}\int_0^{+\infty}\E^{C_0 x}(|q(x)| + |q(-x) + c^2|) dx <\infty, \quad q^{(i)}x^4\in L_1(\R),
\ee
where $i=1,\dots,7$ and $C_0>c>0$. Let $q(x,t)$ be the solution of \eqref{kdv}, \eqref{ini2}. 
Assume that $\delta>0$ is sufficiently small such that the intervals $[4\kappa_j^2 -\delta, 4\kappa_j^2 +\delta]$ are disjoint and lie inside $(4c^2, \infty)$. Then the asymptotics in the soliton region $\frac{x}{t} - 4c^2\geq \varepsilon$ for some small $\varepsilon>0$ are as follows:

If $x\to\infty$, $t\to\infty$ and $\left|\frac{x}{t} - 4\kappa_j^2\right|<\delta$ for some $j$, 
\be\label{as1}
q(x,t)=\frac{-4\kappa_j\gamma_j^2(x,t)}{(1+(2\kappa_j)^{-1}\gamma_j^2(x,t))^2}+O(t^{-l}),\ee
for any $l\in\mathbb N$, where
\be\label{gammaxt}
\gamma_j^2(x,t)=\gamma_j^2\E^{-2\kappa_j x+8\kappa_j^3 t}\prod_{j=j+1}^N\left(\frac{\kappa_i -\kappa_j}{\kappa_i + \kappa_j}\right)^2.
\ee

If  $\left|\frac{x}{t} - 4\kappa_j^2\right|>\delta$ for all $j$, then
$q(x,t)=O(t^{-l})$ for all $l \in\mathbb N$.
\end{proposition*}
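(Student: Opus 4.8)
The plan is to prove this through a nonlinear steepest descent (NSD) analysis of the vector Riemann--Hilbert problem (RHP) associated with the inverse scattering transform for the step-like Schr\"odinger operator $-\partial_x^2 + q$. First I would record the scattering data: the eigenvalues $-\kappa_j^2$, which all lie below $-c^2$ so that $\kappa_j > c$ and the soliton velocities $4\kappa_j^2$ all exceed $4c^2$; the reflection coefficients; and the norming constants $\gamma_j$. The hypotheses \eqref{ini2} enter precisely here: the exponential decay with rate $C_0 > c$ guarantees that the reflection coefficient continues analytically into a strip straddling the continuous spectrum, in particular past the branch point at $k = \I c$ produced by the left background, while the $C^7$-smoothness together with the moment bounds $q^{(i)} x^4 \in L_1(\R)$ provide the decay of the reflection coefficient at infinity needed for a well-posed RHP. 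Evolving the scattering data in time, the oscillatory factor carried by the off-diagonal jump and by the residue conditions is $\E^{2\I k x + 8\I k^3 t}$, which at the pole $k = \I\kappa_j$ reduces to $\E^{-2\kappa_j(x - 4\kappa_j^2 t)}$.

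Next comes the sign analysis. Writing $\xi = x/t$, for $\xi > 4c^2$ there are no real stationary points of the phase, so the continuous-spectrum contribution is purely exponentially decaying: after the usual factorization of the jump matrix and deformation of the contour off the real axis into the half-planes where $\re(2\I k x + 8\I k^3 t)$ has the favorable sign -- a deformation legitimized by the analytic continuation secured above -- the jump on the deformed continuous spectrum becomes $O(\E^{-\varepsilon t})$ and hence negligible at every polynomial order. What remains is a meromorphic RHP whose only data are the poles at $k = \I\kappa_j$ with residues proportional to $\gamma_j^2\,\E^{-2\kappa_j(x - 4\kappa_j^2 t)}$.

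Finally I would sort the poles by whether $4\kappa_i^2$ lies below or above $\xi$. For the slower solitons ($4\kappa_i^2 < \xi$) the residue factor is exponentially small and is absorbed into the error. For the faster solitons ($4\kappa_i^2 > \xi$) the factor is exponentially large; these are neutralized by a diagonal conjugation built from Blaschke-type factors $(k - \I\kappa_i)/(k + \I\kappa_i)$, which reflects each such pole and trades its growing exponential for a decaying one. When $|\xi - 4\kappa_j^2| < \delta$, the residual finite contributions of these conjugations, evaluated at the single surviving pole $k = \I\kappa_j$, generate exactly the product $\prod_{i=j+1}^N\big(\frac{\kappa_i - \kappa_j}{\kappa_i + \kappa_j}\big)^2$ and hence the renormalized norming constant $\gamma_j^2(x,t)$ of \eqref{gammaxt}; the leftover single-pole RHP is rational and solved explicitly, and reconstructing $q(x,t)$ from the large-$k$ expansion of its solution yields the one-soliton formula \eqref{as1}. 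In the complementary case $|\xi - 4\kappa_j^2| > \delta$ for all $j$ no pole survives to leading order, and the same argument gives $q(x,t) = O(t^{-l})$.

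The hard part will be the pole bookkeeping in this last step -- arranging a single conjugation that simultaneously turns every growing residue into a decaying one, leaves exactly one active pole, and produces the precise explicit product, while certifying that all discarded terms are genuinely $O(t^{-l})$ uniformly on the relevant $\xi$-window. The analytic prerequisite underpinning the entire deformation, namely continuation of the reflection coefficient past $k = \I c$, is what forces the rate condition $C_0 > c$ and is the other point demanding care.
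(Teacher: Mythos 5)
Your outline is correct and coincides with the proof this statement actually has: the paper does not reprove the Proposition but quotes it from \cite{EGKT}, where it is established by precisely the vector Riemann--Hilbert/nonlinear steepest descent argument you describe --- analytic continuation of the scattering data secured by the rate $C_0>c$, contour deformation exploiting the sign of $\re(2\I k x+8\I k^3 t)$ away from stationary points, conjugation of the faster poles by Blaschke-type factors $(k-\I\kappa_i)/(k+\I\kappa_i)$ whose evaluation at $k=\I\kappa_j$ yields the product in \eqref{gammaxt}, and an explicitly solvable one-pole model problem giving \eqref{as1} with error beyond all polynomial orders. The only point your sketch leaves implicit is the jump along the multiplicity-one band $[0,\I c]$ of the steplike contour, which cannot be deformed away but is disposed of in the region $x/t-4c^2\geq\varepsilon$ by the same phase-sign analysis, since $-2hx+8h^3t<-2h(x-4c^2t)$ for $0<h<c$.
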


\begin{remark} Let us check that \eqref{asmain} and \eqref{as1} indeed represent the same leading term 
of the asymptotic expansion. For sufficiently large $t$ the summands in \eqref{asmain} do not interfere up to an exponentially small term. They have nonzero profiles in vicinities of the rays $\frac{x}{t}=4\kappa_j^2$ defined by the phase shifts
\be\label{deltaj}\Delta_j=-\frac{1}{2}\log\frac{\gamma_j^2}{2\kappa_j}-\sum_{i=j+1}^N\log\frac{\kappa_j - \kappa_i}{\kappa_i + \kappa_j}.\ee
On the other hand, using \eqref{gammaxt} and \eqref{deltaj} we get
\begin{align*}
\frac{-4\kappa_j\gamma_j^2(x,t)}{(1+(2\kappa_j)^{-1}\gamma_j^2(x,t))^2}&=-\frac{8\kappa_j^2}{\left(\frac{\sqrt{2\kappa_j}}{\gamma_j(x,t)} +\frac{\gamma_j(x,t)}{\sqrt{2\kappa_j}}\right)^2} \\
&= -\frac{8\kappa_j^2}{\left(\E^{\kappa_j x- 4\kappa_j^3 t + \Delta_j} +\E^{-\kappa_j x+4\kappa_j^3 t - \Delta_j}\right)^2},
\end{align*}
 which implies \eqref{asmain}.
\end{remark}
Existence of the unique classical solution of the initial-value problem \eqref{kdv}, \eqref{inithor} considered in Theorem \ref{maintheor1} is provided by the results of Grudsky and Rybkin \cite{GR} which  are applicable for the wider class of initial data essentially bounded from below on the left half axis and decaying with rate $x^{5/2} q(x)\in L_1(\R_+)$ on the right half axis. Recently these results were generalized by 
Laurens \cite{Lau1, Lau2} for initial data, which are the sum of a known steplike KdV solution and an arbitrary function
from $H^{-1}(\R)$. However, all these results do not allow us to control the behavior of $q(x,t)$ as $x\to \pm\infty$ for any fixed $t$, and they do not ensure the requirements on smoothness of the scattering data applied in the  asymptotic analysis. This can be achieved in the framework of the classical IST generalized to the case of steplike solutions.  For steplike KdV solutions the following result holds:

\begin{proposition*}[\cite{EGT, ET3}] 
Let $p_\pm(x)$ be two different algebro-geometric quasi-periodic finite gap potentials and let $p_\pm(x,t)$ be two finite gap KdV solutions associated with the initial data $p_\pm(x,0)=p_\pm(x)$. Assume that $q(x,0)\in C^{n_0}(\R)$ and
\be\label{iniper}\pm \int_0^{\pm\infty}\left|\frac{d^n}{dx^n}(q(x,0) - p_\pm(x))\right|(1 + |x|^{m_0})dx <\infty,\quad 0\leq n\leq n_0,
\ee 
where 
\[ 
m_0\geq 8, \quad \mbox{and}\quad n_0\geq m_0+5,
\]
are some fixed natural numbers. Then there exists a unique classical solution $q(x,t)\in C^{n_0 - m_0 -2,1}(\R^2)$ of the initial-value problem \eqref{kdv}, \eqref{iniper} satisfying
\[ \label{deriv}
\pm\int_0^{\pm\infty}\left|\frac{\partial^n}{\partial x^n}(q(x,t) - p_\pm(x,t))\right| \Big(1 + |x|^{[\frac{m_0}{2}]-2}\Big)dx <\infty,
\quad  0\leq n\leq n_0 - m_0 -2,
\]
and
\[ \label{dert}
\pm\int_0^{\pm\infty}\left|\frac{\partial}{\partial t}(q(x,t) - p_\pm(x,t))\right| \Big(1 + |x|^{[\frac{m_0}{2}]-2}\Big)dx <\infty,
\]
for all $t\in\R$.
\end{proposition*}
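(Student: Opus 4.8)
The plan is to prove this via the inverse scattering transform adapted to a steplike finite-gap background, reducing the Cauchy problem \eqref{kdv}, \eqref{iniper} to the (time-linear) evolution of scattering data together with a family of Marchenko equations parametrized by $(x,t)$. The argument splits into three stages: (i) the direct transform at $t=0$, which reads off the scattering data from $q(\cdot,0)$; (ii) the explicit time evolution of these data under the KdV flow relative to the backgrounds $p_\pm(x,t)$; and (iii) the inverse transform, which reconstructs $q(x,t)$ and certifies its regularity and decay. The heart of the matter is then a quantitative bookkeeping of how the $m_0$ moments and $n_0$ derivatives assumed in \eqref{iniper} propagate through these three stages.

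\emph{Direct problem.} First I would fix $t=0$ and study the Schr\"odinger operator $-\frac{d^2}{dx^2}+q(x,0)$ as a perturbation of the two half-line operators with potentials $p_\pm$. The Jost solutions are built as perturbations of the Baker--Akhiezer functions of $p_\pm$ and represented through transformation operators with kernels $K_\pm(x,y)$. The central lemma is that the weighted bounds \eqref{iniper} are equivalent to corresponding smoothness and decay of $K_\pm$, hence of the scattering data: the transmission coefficient, the reflection coefficients $R_\pm(k)$ on the two background spectra, the finitely many eigenvalues in the gaps, and the norming constants. Concretely, $m_0$ moments of the perturbation translate into the number of moments of $K_\pm$ and into decay/differentiability of $R_\pm$ away from the band edges, while $C^{n_0}$ smoothness controls the behaviour of $R_\pm$ as $k\to\infty$.

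\emph{Inverse problem, regularity, verification.} Next I would evolve the data (eigenvalues fixed; $R_\pm$ and the norming constants acquire the explicit phase factors coming from the second operator of the background Lax pair) and, for each $(x,t)$, assemble the Marchenko kernels $F_\pm(x,y,t)$ from the evolved data. Unique solvability of the Marchenko equations for every $(x,t)$ follows from a Fredholm argument together with selfadjointness of the underlying operator, which excludes nontrivial homogeneous solutions. The potential is then recovered as $q(x,t)=p_\pm(x,t)-2\frac{d}{dx}K_\pm(x,x,t)$, and differentiating the Marchenko equation in $x$ and $t$ and estimating the kernels yields the claimed $C^{n_0-m_0-2,1}(\R^2)$ regularity and the weighted $L^1$ bounds. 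The loss of orders is structural: $F_\pm$ contains the Fourier transform of $R_\pm$ together with quadratic terms in the data, which roughly halves the available moments (hence the weight $[\frac{m_0}{2}]-2$), and reading off the potential via $-2\frac{d}{dx}K_\pm(x,x)$ accounts for part of the drop from $n_0$. Finally I would verify that $q$ solves KdV—since the evolved data are exactly those of a KdV solution—and invoke uniqueness of classical solutions in this weighted class.

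\emph{Main obstacle.} The hard part is the quantitative estimation of the Marchenko kernels and their $x$- and $t$-derivatives, uniformly in $(x,t)$, in the steplike setting. Two distinct finite-gap backgrounds mean two different spectra whose band edges generally do not coincide, and near those edges the scattering data and Baker--Akhiezer functions are only square-root regular, so the translation between moments and smoothness of $R_\pm$ must be carried out with care precisely there. Controlling the $t$-derivative is the most delicate point, because the dispersion factor multiplying $R_\pm$ grows polynomially in the spectral parameter and hence consumes several derivatives of the data; making this loss exactly $m_0+2$ in $x$ while keeping $C^1$ in $t$ and preserving the weighted decay with weight $[\frac{m_0}{2}]-2$ is what forces the numerology $m_0\geq 8$, $n_0\geq m_0+5$.
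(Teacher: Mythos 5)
Your proposal is not being compared against an in-paper proof, because the paper does not prove this proposition: it is quoted from \cite{EGT, ET3} as background, and the only analogue proved here is Theorem~\ref{thoern} (constant backgrounds, better constants) in Sections~2--3. That said, your sketch follows essentially the same classical IST strategy as those references and as the paper's own analogous proof: direct scattering via transformation operators relative to the two backgrounds, explicit linear time evolution of the scattering data, reconstruction through the left and right Marchenko equations, and a moment/derivative bookkeeping in which the kernels retain roughly half the moments and the potential loses $m_0+2$ derivatives --- all of which matches the statement's numerology. The one substantive step your outline glosses over is the one the paper itself singles out as ``the last and most important step'' of the sufficiency argument: with two backgrounds one obtains \emph{two} reconstructions, $q_+(x,t)$ from the right Marchenko equation and $q_-(x,t)$ from the left one, and it must be \emph{proved} that they define one and the same potential (in the constant-background case, $q_+(x,t)=q_-(x,t)-c^2$); writing $q(x,t)=p_\pm(x,t)-2\frac{d}{dx}K_\pm(x,x,t)$ presupposes this identity rather than establishing it. Apart from that omission, your identification of the main technical obstacle --- uniform estimation of the Marchenko kernels and their $x$- and $t$-derivatives despite the square-root-type nonsmoothness of the scattering data at the band edges --- is exactly where the real work lies both in \cite{EGT, ET3} and in the paper's Section~3.
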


Hence the largest class of initial data for which unique solvability is established, corresponds to initial datum with $13$ continuous derivatives and $8$ finite moments of perturbation. In our case of constant background solutions
$p_+(x)=p_+(x,t)=0$ and $p_-(x)=p_-(x,t)=-c^2$, we can extend this class 
to initial datum with $7$ continuous derivatives and $4$ finite moments.

\begin{theorem} \label{thoern}  
Given a pair  $(m_0, n_0)\in \N\times\N$ such that
\be\label{estbig}
m_0\geq 4, \quad \mbox{and}\quad n_0\geq m_0+3,\ee assume that  $q(x)\in C^{n_0}(\R)$ and satisfies \eqref{inithor}.
Then there exists a unique classical solution to the initial-value problem \eqref{kdv}, \eqref{inithor}, \eqref{estbig} such that for all $t\in\R$
\be\label{inithor1}\int_0^{+\infty} \Big(1 +|x|^{[\frac{m_0}{2}]-1}\Big)\left(\left|\frac{\partial^n}{\partial x^n}q(x,t)\right| +\left|\frac{\partial^n}{\partial x^n}(q(-x,t)+c^2)\right|\right)dx<\infty,\ee
for all $ 0\leq n\leq n_0-m_0,$ and
\be\label{inithor2}\int_\R \Big(1 +|x|^{[\frac{m_0}{2}]-1}\Big)\left|\frac{\partial}{\partial t}q(x,t)\right| dx<\infty.
\ee
\end{theorem}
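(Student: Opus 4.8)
The plan is to construct the solution by the classical inverse scattering transform, following the scheme behind the Proposition of \cite{EGT, ET3} but specializing it to the two \emph{constant} backgrounds $p_+\equiv 0$ and $p_-\equiv -c^2$. The decisive simplification is that on a constant background the Weyl (Baker--Akhiezer) solutions are elementary exponentials, namely $\E^{\I k x}$ for the right background and $\E^{\pm\I\sqrt{k^2+c^2}\,x}$ for the left one (with $\lambda=k^2$), in place of the quasi-periodic functions built from theta functions in the general finite-gap case. Every kernel occurring in the direct and inverse problems then becomes explicit, and it is this explicitness that will let me sharpen the constants and replace the thresholds $(m_0\geq 8,\ n_0\geq m_0+5)$ of \cite{EGT, ET3} by $(m_0\geq 4,\ n_0\geq m_0+3)$ as in \eqref{estbig}.

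First I would treat the direct scattering problem for $-\partial_x^2+q(x)$. Using \eqref{inithor} I would establish the Jost solutions normalized to $\E^{\I k x}$ on the right and to $\E^{-\I\sqrt{k^2+c^2}\,x}$ on the left, the transmission and reflection coefficients, the finite discrete spectrum $-\kappa_N^2<\dots<-\kappa_1^2$, and the norming constants $\gamma_j$. The crucial step is to convert the hypothesis that $q$ has $n_0$ derivatives and $m_0$ finite moments of perturbation into quantitative statements about the reflection coefficient $R(k)$ in the spectral variable: the $m_0$ moments yield $C^{m_0}$-smoothness of $R$ across the spectrum, while the $n_0$ derivatives of $q$ yield the decay $R(k)=O(|k|^{-n_0-1})$ as $|k|\to\infty$. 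Here I would pay particular attention to the two band edges $\lambda=0$ and $\lambda=-c^2$, where the variable $k$ and the left quasimomentum $\sqrt{k^2+c^2}$ interact, distinguishing the generic from the resonant case, since the local behavior of the scattering data there enters the moment count in the reconstruction.

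Next, the time evolution of the scattering data is explicit on a constant background: the eigenvalues $\kappa_j$ are conserved, the norming constants evolve as $\gamma_j^2(t)=\gamma_j^2\,\E^{8\kappa_j^3 t}$ (compare \eqref{gammaxt} at $x=0$), and the reflection coefficient acquires the factor $\E^{8\I k^3 t}$. I would feed these time-dependent data into the Marchenko (Gelfand--Levitan--Marchenko) equation associated with the constant background and solve it for the transformation kernel, solvability and uniqueness following from a standard Fredholm argument once the direct-scattering estimates are in hand. The potential $q(x,t)$ is then recovered from the kernel along the diagonal, and one checks that it is a genuine classical solution of \eqref{kdv} with $q(x,0)=q(x)$. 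The spatial bounds \eqref{inithor1} are obtained by differentiating the kernel and tracking how the smoothness and decay of the scattering data pass through the integral equation, while the time-derivative bound \eqref{inithor2} then follows from \eqref{inithor1} and the equation \eqref{kdv} itself, which expresses $q_t=6qq_x-q_{xxx}$; indeed, the requirement $n_0\geq m_0+3$ in \eqref{estbig} is exactly what places $q_{xxx}(\cdot,t)$ inside \eqref{inithor1}.

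The main obstacle is the moment-and-derivative bookkeeping that pins down the thresholds and the weight exponents. To bound $\partial_x^n q(x,t)$ against the weight $|x|^{[\frac{m_0}{2}]-1}$ one is led, through the Fourier-type duality underlying the Marchenko equation, to require that $k^{\,n}\,\partial_k^{\,p}\big(R(k)\,\E^{8\I k^3 t}\big)$ be integrable in $k$, with $p=[\frac{m_0}{2}]-1$. Each of the $p$ derivatives may fall on the cubic phase and produce a factor growing like $k^2$, so integrability demands the decay $R(k)=O\big(|k|^{-(n+2p)-1}\big)$; this decay is precisely what the $n_0$ derivatives of $q$ supply, and the resulting constraint $n+2p<n_0$ forces both the derivative budget $n\leq n_0-m_0$ and the weight exponent $[\frac{m_0}{2}]-1$, while the $C^{m_0}$-smoothness of $R$ from the $m_0$ moments of $q$ guarantees that the $p$ needed $k$-derivatives exist. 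Carrying these estimates out for the explicit constant-background kernels—rather than invoking the general finite-gap bounds of \cite{EGT, ET3}—is what yields the relaxed thresholds and the extra moment and derivatives. Uniqueness of the resulting classical solution can, alternatively, be inferred from the well-posedness theory of \cite{GR}, which already covers the present class of data.
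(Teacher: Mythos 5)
Your overall scheme---explicit time evolution of the scattering data, Marchenko reconstruction, and moment/derivative bookkeeping---is indeed the paper's route, and your remark that \eqref{inithor2} follows from \eqref{inithor1} with $n\le 3$ together with $q_t=6qq_x-q_{xxx}$ correctly reflects one reason for $n_0\ge m_0+3$. However, the proposal skips the two steplike-specific difficulties that constitute the actual technical core and that determine the very constants appearing in the statement. First, the right kernel \eqref{defF1} contains, besides the Fourier-type integral of $R(k)\E^{8\I k^3t}$, the integral of $k|T_1(\la)|^2/|k_1|$ over the spectrum of multiplicity one $[-c^2,0]$; your bookkeeping, which tracks only $k^n\partial_k^p\big(R(k)\E^{8\I k^3 t}\big)$, ignores it entirely. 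When each of these two integrals is integrated by parts, boundary terms at the edge $k=0$ (resp.\ $h=0$) appear which decay only like $x^{-j}$ and would individually destroy the weighted $L^1$ bound demanded by property \textbf{IV}; the paper's proof hinges on showing that these terms cancel pairwise, equations \eqref{eq3.21}--\eqref{eq3.22}, proved via the Wronskian symmetry \eqref{eq3.24}. Nothing in your outline plays this role, and ``attention to the band edges'' in the direct problem is not a substitute for it.

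Second, the weight exponent $[\frac{m_0}{2}]-1$ and the threshold $n_0\ge m_0+3$ do not come from derivatives of the cubic phase falling on $R(k)$, as you assert. That mechanism concerns the right kernel at fixed $t$ (Lemma \ref{got}), and your single inequality $n+2p<n_0$ cannot ``force'' both $n\le n_0-m_0$ and $p=[\frac{m_0}{2}]-1$: at the threshold $n_0=m_0+3$ it permits $n\le 4$ (even $m_0$) or $n\le 5$ (odd $m_0$), not the asserted $n\le n_0-m_0=3$. The true bottleneck is the left kernel $F_1$: the left reflection coefficient is a function of the local parameter $\sqrt{k_1\mp c}$, so $\partial_{k_1}^s\mathcal R=O\big((k_1\mp c)^{-(2s-1)/2}\big)$ near $k_1=\pm c$, and $\mathcal R$ has only $[\frac{m_0}{2}]$ derivatives in $k_1$ there. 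Handling this requires the cutoffs $\mathcal B_\pm$, the expansion in $z=\sqrt{k_1-c}$, and the hypergeometric estimates \eqref{eq3.26}--\eqref{eq3.29}, which give $F_1=H_1\, x^{-[\frac{m_0}{2}]}$ (hence the weight $[\frac{m_0}{2}]-1$), while counting how many $x$-derivatives the remaining integral survives (its integrand decays like $k_1^{2[\frac{m_0}{2}]-n_0-1}$ and loses one power per derivative, with four derivatives needed) is exactly what forces $n_0\ge m_0+3$. A final structural gap: solving the right Marchenko equation ``by a standard Fredholm argument'' does not by itself produce a potential on all of $\R$ with the correct left asymptotics; one must verify the full characterization \textbf{I}--\textbf{IV} for the evolved data $\mathcal S(t)$ and invoke Theorem \ref{charak}, whose decisive content is that the two one-sided reconstructions coincide, $q_+=q_--c^2$. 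Without these ingredients your outline cannot establish the theorem with the stated exponents.
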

Thus condition \eqref{estbig} ensures the well-posedness of the  Riemann--Hilbert problem which forms the base of the 
long-time asymptotic analysis of steplike solutions via the NSD approach. It also secures the smoothness and decay 
properties of the scattering data required for the asymptotic analysis used in Theorem~\ref{maintheor1}.

\section{Scattering theory for the Schr\"{o}dinger operator}  \label{S:scattering}

In this section we briefly describe the characteristic (necessary and sufficient) properties of the scattering data for the Schr\"{o}dinger operator
\[ (Lf)(x):=-\frac{d^2}{dx^2}f(x) + q(x)f(x)\] with steplike potential \eqref{ini1} satisfying \eqref{inithor}, assuming that $m_0\geq 1$ and $n_0\geq 0$. Let $\la$ be the spectral parameter of the problem 
\be\label{sch} -\frac{d^2}{dx^2}f(x) + q(x)f(x)=\la f(x).\ee
We introduce additional spectral parameters $k=\sqrt\la$ and $k_1=\sqrt{\la + c^2}$ which map the domains $\C\setminus \R_+$ and $\mathcal D:=\C\setminus [-c^2, \infty)$ conformally onto $\C^+$. The boundary of the domain $\mathcal D$ is treated as consisting of two sides of the cut along the interval $[-c^2, \infty)$, with distinguished points $\la^u=\la+\I 0$ and $\la^l=\la- \I 0$ on this boundary. We assume that $\la\in\overline {\mathcal D}$. This gives a one-to-one correspondence between the parameters $k$, $k_1$ and $\la$. 

The spectrum of the operator $L$ consists of a continuous spectrum of multiplicity two on $\R_+$, a continuous spectrum of multiplicity one on $[-c^2, 0]$ and a finite discrete spectrum $\{\la_1, \dots, \la_N\}$, where $\la_N<\dots<\la_1<-c^2.$

According to \cite{M} and \cite{DT}, equation \eqref{sch} has two Jost solutions
\[\aligned \phi(\la,x)&=\E^{\I k x}\left(1 +\int_0^\infty B(x,y)\E^{2\I k y} dy\right), \quad k\in\ol{\C^+},  \\ \phi_1(\la,x)&=\E^{-\I k_1 x}\left(1 +\int^0_{-\infty} B_1(x,y)\E^{-2\I k_1 y} dy\right),\quad k_1\in\ol{\C^+}, 
\endaligned
\]
where $B(x,y)$ and $B_1(x,y)$ are real-valued functions (the kernels of the transformation operators) satisfying
the properties
\be\label{form1}
B(x,0)=\int_x^{+\infty} q(y) dy,\quad B_1(x,0)=\int_{-\infty}^x (q(y)+c^2)dy.\ee
One of the main characteristics of the spectral problem \eqref{sch}, \eqref{inithor} is the Wronskian
$W(\la):=W(\phi(\la,\cdot), \phi_1(\la,\cdot))$ of the Jost solutions, where 
$W(f,g)=f(x)g^\prime(x) - g(x)f^\prime(x)$. It is well-known that $W(\la)$ has simple zeros at 
the points of the discrete spectrum $\la_j$ and possibly at the end of the continuous spectrum, but no other zeros. 
If $W(-c^2)=0$ we call the point $-c^2$ a resonant point. Denote by
\[\gamma_j^2:=\frac{1}{\int_{\R}\phi^2(\la_j,x) dx},\quad \gamma_{j,1}^2:=\frac{1}{\int_{\R}\phi_1^2(\la_j,x) dx}\]
the right, left normalizing constants, respectively.

Consider the standard  scattering relations
\[\aligned T(\la)\phi_1(\la, x)&=\ol{\phi(\la,x)} + R(\la)\phi(\la,x),\quad k\in\R,\\
T_1(\la)\phi(\la, x)&=\ol{\phi_1(\la,x)} + R_1(\la)\phi_1(\la,x),\quad k_1\in\R,\endaligned\]
where the transmission and reflection coefficients constituting the entries of the scattering matrix are 
defined as usual
\be\aligned T(\la)&:=\frac{W(\ol{\phi(\la)}, \phi(\la))}{W(\la)},\quad R(\la):=\frac{W(\ol{\phi(\la)}, \phi_1(\la))}{W(\la)}, \quad \quad k\in\R,\\
T_1(\la)&:=-\frac{W(\ol{\phi_1(\la)}, \phi_1(\la))}{W(\la)},\quad R_1(\la):=-\frac{W(\ol{\phi_1(\la)}, \phi(\la))}{W(\la)}, \quad k_1\in\R.
\endaligned\ee

\begin{lemma}[\cite{BET, CK, EGLT}] \label{1-3}
Let $q(x)$ satisfy \eqref{inithor} with $m_0\geq 1$ and $n_0\geq 0$.
The entries of the scattering matrix satisfy
\vskip 3mm
\begin{itemize}
\item[{\bf I.(a)}] $T(\lambda+\I 0) =\overline{T(\lambda-\I 0)}$ and
$R(\lambda+\I 0) =\overline{R(\lambda-\I 0)}$ for $k(\la)\in\R$; \newline
$T_1(\lambda+\I 0) =\overline{T_1(\lambda-\I 0)}$ and 
$R_1(\lambda+\I 0) =\overline{R_1(\lambda-\I 0)}$ for $k_1(\la)\in\R$.
\vskip 1mm
\item[{\bf (b)}] $\dfrac{T_1(\lambda)}{\overline{T_1(\lambda)}}= R_1(\lambda)$ for $k_1(\la)\in [-c,c]$.
\item[{\bf (c)}] $1 - |R(\lambda)|^2 =
\dfrac{k_1}{k}\,|T(\lambda)|^2$ and $1 - |R_1(\lambda)|^2 =
\dfrac{k}{k_1}\,|T_1(\lambda)|^2$ for $k(\lambda)\in\R$.
\item[{\bf (d)}] $\overline{R(\lambda)}T(\lambda) +
R_1(\lambda)\overline{T(\lambda)}=\overline{R_1(\lambda)}T_1(\lambda) +
R(\lambda)\overline{T_1(\lambda)}=0$ for $k(\lambda)\in \R$.
\vskip 1mm
\item[{\bf (e)}] $T(\lambda) = 1 +O(\frac{1}{\sqrt\la})$, $T_1(\lambda) = 1 +O(\frac{1}{\sqrt\la})$  as $\lambda\to\infty$, \\
 $R(\lambda) = O(\frac{1}{\sqrt\la})$, $R_1(\lambda) = O(\frac{1}{\sqrt\la})$ as $\lambda\to\infty$.
\vskip 1mm
\item[{\bf II. \ }]
The functions $T(\lambda)$ and $T_1(\la)$ can be analytically continued to 
$\mathcal D$ satisfying
\[ 
2\I k(\la) T^{-1}(\la)  = 2\I k_1(\la) T_1^{-1}(\la) =:W(\la),
\]
where $W(\la)$ has the following properties: \begin{itemize}
\item[$(i)$] It is holomorphic in the domain $\mathcal D$ and continuous up to the boundary. Moreover, $W(\la + \I 0)=\ol{W(\la - \I 0)}\neq 0$ as $\la \in (-c^2, \infty)$.
\item[$(ii)$] In $\mathcal D$ its only zeros are $\la_1, \dots, \la_N$, and
\[ 
\left(\frac{d W}{d \la}\,(\la_j)\right)^{-2}=\gamma_j^2\gamma_{j,1}^2.
\]
\item[$(iii)$] If $W(-c^2)=0$ then 
\[ 
W(\la)=\I\gamma\sqrt{\la +c^2}(1+o(1)), \quad \mbox{as}\ \ \la\to -c^2,\quad \mbox{where}\ \ \gamma\in\R\setminus\{0\}.
\]
\end{itemize}
\item[{\bf III. }] $R(\la)$  and $R_1(\la)$ are continuous for $k(\la)\in\R$ and $k_1(\la)\in\R$, respectively. 
\end{itemize}
\end{lemma}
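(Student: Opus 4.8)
The plan is to derive every item from the integral (transformation-operator) representations of the two Jost solutions together with the single elementary fact that the Wronskian of two solutions of \eqref{sch} does not depend on $x$. As analytic input I would first note that, since the kernels $B(x,\cdot)$ and $B_1(x,\cdot)$ are integrable with the decay inherited from \eqref{inithor}, the solution $\phi(\la,x)$ continues analytically in $k$ to $\C^+$ and continuously up to $\R$, while $\phi_1(\la,x)$ does the same in $k_1$; their common domain of analyticity is exactly $\mathcal D$. This yields part III at once, and part I.(a) is Schwarz reflection together with the reality of $q$, which makes $\overline{\phi(\la,x)}$ a solution at $\overline\la$ that coincides with $\phi$ evaluated at $-k$ when $k$ is real.

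Next I would obtain the algebraic relations by evaluating $x$-independent Wronskians between $\phi,\overline\phi,\phi_1,\overline{\phi_1}$ and inserting their plane-wave behavior as $x\to\pm\infty$. These give $W(\overline\phi,\phi)=2\I k$ and $W(\overline{\phi_1},\phi_1)=-2\I k_1$, so the definitions of $T,T_1$ immediately produce the identity $2\I k\,T^{-1}=2\I k_1\,T_1^{-1}=W$ of part II. Substituting the scattering relations into further such Wronskians yields the unitarity relations (c) and the off-diagonal relations (d); specializing to the band $k_1\in[-c,c]$, where $k$ is purely imaginary and the continuous spectrum has multiplicity one, gives (b).

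For the remaining content of part II I would read holomorphy of $W$ on $\mathcal D$ and continuity to the boundary directly off the Jost solutions, with $W(\la+\I 0)=\overline{W(\la-\I 0)}$ again from Schwarz reflection. Nonvanishing on $(-c^2,\infty)$ holds because a zero there would force $\phi$ and $\phi_1$ to be proportional despite their distinct exponential behavior at the two ends; the zeros inside $\mathcal D$ therefore coincide with the eigenvalues $\la_j$, where $\phi(\la_j,\cdot)=c_j\phi_1(\la_j,\cdot)\in L^2(\R)$. The norm formula $\big(W'(\la_j)\big)^{-2}=\gamma_j^2\gamma_{j,1}^2$ is the classical identity obtained by differentiating \eqref{sch} in $\la$, integrating $W'(\la_j)$ against the eigenfunctions, and using $\phi=c_j\phi_1$ so that the two norms $\int\phi^2$ and $\int\phi_1^2$ combine into the stated product.

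The genuine work sits in the asymptotics (e) and the resonant expansion (iii). For (e) I would expand $W$ (equivalently $T$) through the kernels and their $x$-derivatives and extract the leading $O(\la^{-1/2})$ term by a single integration by parts, the boundary values being supplied by \eqref{form1}; the moment bound \eqref{inithor} with $m_0\geq 1$ secures the requisite integrability, so $n_0\geq 0$ already suffices for the stated rate. The main obstacle is (iii): at a resonant point the lower band edge $-c^2$ corresponds to $k_1=0$, the map $\la\mapsto k_1$ degenerates, and one must expand $\phi_1(\la,x)$ near $k_1=0$ to show that $W$ vanishes precisely like $\I\gamma\sqrt{\la+c^2}$. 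Establishing this square-root law, together with the reality and non-vanishing of $\gamma$, requires a careful local analysis of the left Jost solution at the bottom of the step, with the reality of $\gamma$ following by feeding the reflection symmetry I.(a) into the leading term of the expansion.
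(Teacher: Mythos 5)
The paper contains no proof of this lemma: it is imported verbatim, with citations, from \cite{BET, CK, EGLT}. Your outline follows essentially the same classical route as those references — transformation-operator representations for analyticity and continuity, $x$-independence of Wronskians evaluated against the plane-wave asymptotics to get $W(\ol\phi,\phi)=2\I k$, $W(\ol{\phi_1},\phi_1)=-2\I k_1$ and hence the identity $2\I k\,T^{-1}=2\I k_1\,T_1^{-1}=W$ of {\bf II}, the relations {\bf I.(b)--(d)} by substituting the scattering relations into further Wronskians, and the norming-constant identity in {\bf II}$(ii)$ via $\frac{dW}{d\la}(\la_j)=-\int_\R \phi(\la_j,x)\phi_1(\la_j,x)\,dx$ combined with $\phi=c_j\phi_1$. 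So there is no genuinely different approach to compare; the question is only whether your sketch is complete.

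Two steps are genuine gaps. First, your justification that $W\neq 0$ on $(-c^2,\infty)$ — that proportionality of $\phi$ and $\phi_1$ would contradict their ``distinct exponential behavior at the two ends'' — fails exactly where it is needed: for $\la\in(0,\infty)$ both Jost solutions are bounded and oscillatory at their respective ends, and for $\la\in(-c^2,0)$ a solution decaying at $+\infty$ proportional to an oscillatory one at $-\infty$ is not excluded by growth considerations (this is precisely the band of full reflection, $|R_1|=1$). The correct mechanism is the conjugate-pair Wronskians you computed earlier: if $\phi_1=c\phi$ with $k,k_1>0$, then $-2\I k_1=W(\ol{\phi_1},\phi_1)=|c|^2\,W(\ol\phi,\phi)=2\I k\,|c|^2$, which is impossible; on $(-c^2,0)$ one uses instead that $\phi$ is real there, so $\phi_1=c\phi$ would force $W(\ol{\phi_1},\phi_1)=0\neq-2\I k_1$. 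Second, in {\bf II}$(iii)$ the entire content — that the zero at $-c^2$ is of order exactly $\sqrt{\la+c^2}$ with $\gamma\in\R\setminus\{0\}$ — is announced rather than proved. What is needed is: differentiability of $W$ in the local parameter $k_1$ at $k_1=0$ (this is where the first moment $m_0\geq 1$ enters), giving $W=W(-c^2)+\dot W\,k_1+o(k_1)$; reality of $W$ for $\la<-c^2$ (both Jost solutions real there, $k_1\in\I\R_+$) to conclude $\dot W\in\I\R$, i.e.\ $\dot W=\I\gamma$ with $\gamma$ real; and an argument excluding $\dot W=0$ in the resonant case, e.g.\ the computation in \cite{CK, BET} (via the Lagrange identity applied to $\phi_1$ and its $k_1$-derivative) which exhibits $\dot W(0)$ as a nonzero multiple of the proportionality constant between $\phi(-c^2,\cdot)$ and $\phi_1(-c^2,\cdot)$. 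Your appeal to the symmetry {\bf I.(a)} gives the reality of $\gamma$ but says nothing about its nonvanishing, so the hardest item of the lemma (and, incidentally, the continuity of $R_1$ at $k_1=0$ in the resonant case, where numerator and denominator vanish simultaneously) is left unestablished.
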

Define
\[K(x,y)=\frac{1}{2} B\Big(x, \frac{y-x}{2}\Big), \quad K_1(x,y)=\frac{1}{2} B_1\Big(x, \frac{y-x}{2}\Big),\]
and
\be\label{defF}
\aligned F(x)&=\frac{1}{2\pi}\int_{\R} R(\la)\E^{\I k x} dk +\frac{1}{4\pi}\int_{-c^2}^0 |T_1(\la)|^2\E^{\I k x}\frac{d\la}{|k_1|} +\sum_{j=1}^N\gamma_j^2\E^{-\kappa_j x};\\
F_1(x)&=\frac{1}{2\pi}\int_{\R} R_1(\la)\E^{-\I k_1 x}d k_1 +\sum_{j=1}^N\gamma_{j,1}^2 \E^{\kappa_{j,1} x},\quad \kappa_{j,1}=\sqrt{-c^2-\la_j}>0.\endaligned\ee
These functions are connected by the right and left Marchenko equations (\cite{BF, CK})
\be\label{March}
\aligned K(x,y) + F(x+y) + \int_x^{+\infty} K(x,s) F(y+s)ds&=0,\quad y>x;\\
K_1(x,y) + F_1(x+y) + \int^x_{-\infty} K_1(x,s) F_1(y+s)ds&=0,\quad y<x.
\endaligned\ee

\begin{lemma}[\cite{M}] \label{lemma 2.8} Let $q(x)$ satisfy \eqref{inithor} with $m_0\geq 1$ and $n_0\geq 0$. Then 
\vskip 2mm
{\bf IV.}\ $F(x), F_1(x)\in C^{n_0+1}(\R)$ and
\[ \label{dec} 
\int_0^{+\infty} \left|\frac{d^n F}{d x^n}\right|(1 +|x|^{m_0}) dx<\infty, \quad 
\int^0_{-\infty} \left|\frac{d^n F_1}{d x^n}\right|(1 +|x|^{m_0}) dx<\infty, \quad 1\leq n\leq n_0+1.
\]
\end{lemma}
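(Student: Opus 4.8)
The plan is to read off the regularity and decay of $F$ (and, symmetrically, $F_1$) from the definition \eqref{defF} via Fourier duality, after first importing the requisite properties of the scattering coefficients in the spectral variable from the hypotheses on $q$. Concretely, I would split
\[
F=F_r+F_b+F_d,\qquad F_r(x)=\frac{1}{2\pi}\int_{\R}R(\la)\E^{\I kx}\,dk,\quad F_d(x)=\sum_{j=1}^N\gamma_j^2\E^{-\kappa_j x},
\]
with $F_b$ the integral over the simple band $[-c^2,0]$. On $x>0$ the last two pieces are harmless: $\kappa_j>0$ makes $F_d$ smooth with exponential decay, while for $\la\in[-c^2,0]$ we have $k=\I\sqrt{-\la}$, so $\E^{\I kx}=\E^{-\sqrt{-\la}\,x}$ and $F_b$ together with all its $x$-derivatives is exponentially small on $x>0$; the only delicate point is the edge $\la=-c^2$, where $\abs{k_1}^{-1}$ is singular, but property II$(iii)$ shows $\abs{T_1(\la)}^2/\abs{k_1}$ has at worst an integrable $(\la+c^2)^{-1/2}$ singularity in the resonant case $W(-c^2)=0$ and vanishes otherwise. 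Any weight $\abs{x}^{m_0}$ is absorbed by the exponential decay, so $F_b,F_d$ satisfy all the claimed bounds with room to spare and the problem reduces to the reflection integral $F_r$.

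For $F_r$ the mechanism is the standard Fourier correspondence in the variable $k$. Differentiating under the integral gives $\frac{d^n}{dx^n}F_r(x)=\frac{1}{2\pi}\int_{\R}(\I k)^nR(\la)\E^{\I kx}\,dk$, so that $F_r\in C^{n_0+1}$ once $k^{n_0+1}R(\la)$ is integrable, i.e.\ once $R$ decays fast enough in $k$; the extra order of smoothness of $F$ relative to $q$ is exactly the base decay $R(\la)=O(\abs{k}^{-1})$ encoded in $B(x,0)=\int_x^{\infty}q$. The weighted bound $\int_0^{\infty}(1+\abs{x}^{m_0})\bigl|\frac{d^n}{dx^n}F_r\bigr|\,dx<\infty$ in turn comes from the $k$-smoothness of $R$: writing $x^{m_0}\E^{\I kx}=(-\I\partial_k)^{m_0}\E^{\I kx}$ and integrating by parts $m_0$ times trades the weight for derivatives falling on $(\I k)^nR$, provided $R\in C^{m_0}$ with the requisite integrability of its $k$-derivatives and no boundary contribution at $k=0$ or $k=\pm\infty$. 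Thus the decay of $R$ governs the number of derivatives of $F$ and the $k$-smoothness of $R$ governs the decay of $F$, and the same analysis applied to $R_1$ in the variable $k_1$ yields the statement for $F_1$ on $(-\infty,0)$.

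Producing these two properties of the scattering coefficients from \eqref{inithor} is the heart of the matter, and here I would use the integral representations of the Jost solutions through the transformation kernels $B,B_1$. Condition \eqref{inithor}, together with the Goursat/Volterra equations for $B,B_1$ and the boundary relations \eqref{form1}, shows that $B(x,\cdot)$ and its $y$-derivatives up to order $n_0$ inherit the $m_0$ finite moments of $q$; substituting these representations into the Wronskian formulas for $R=W(\ol{\phi},\phi_1)/W$ and for $\abs{T_1}^2$ and integrating by parts then yields both the decay $R(\la)=O(\abs{k}^{-n_0-1})$ and the $C^{m_0}$ regularity in $k$ with integrable derivatives, with the analogous bounds for $R_1$.

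The main obstacle is precisely this transfer step and, within it, the behavior at the two spectral edges. At $k=0$, the edge $\la=0$ where the multiplicity of the continuous spectrum drops, $R$ is typically only of limited smoothness, and one must verify that the $k$-derivatives produced in the integration by parts remain integrable up to the edge and contribute no boundary term; at $\la=-c^2$ one must again separate the resonant and non-resonant cases through II$(iii)$ and check that the square-root edge behavior is compatible with the weighted estimates. The delicate part is therefore the sharp bookkeeping that matches exactly $m_0$ moments and $n_0$ derivatives of $q$ to weight $\abs{x}^{m_0}$ and $C^{n_0+1}$-regularity of $F$; once the uniform coefficient estimates up to the edges are in place, the Fourier-analytic conclusions above, summed over $1\le n\le n_0+1$, give the asserted weighted $L^1$ bounds, and the symmetric argument handles $F_1$.
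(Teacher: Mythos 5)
Your reduction of the lemma to the reflection integral $F_r$ breaks down at a specific, fatal point: the claim that the band contribution $F_b$ ``together with all its $x$-derivatives is exponentially small on $x>0$.'' The exponential $\E^{\I kx}=\E^{-hx}$, $h=\sqrt{-\la}\in[0,c]$, is of order one near the corner $\la=0$ of the band, and the integrand does not vanish there: since $\la=0$ is an interior point of the continuous spectrum, $W(0)\neq 0$, so $|T_1(0)|^2=4c^2/|W(0)|^2\neq 0$. Writing $d\la/|k_1|=2h\,dh/\sqrt{c^2-h^2}$ and applying Watson's lemma, one finds $\frac{d^n}{dx^n}F_b(x)\sim C_n x^{-n-2}$ as $x\to+\infty$ with $C_n\neq 0$ generically; hence already $\int_0^{\infty}|F_b'(x)|(1+x^{m_0})\,dx=\infty$ for every $m_0\geq 2$, while the lemma is asserted for all $m_0\geq 1$. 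So the band part cannot be discarded, and the three pieces of \eqref{defF} cannot be estimated separately: the weighted bounds hold only for the \emph{sum} of the reflection and band integrals, because the boundary terms generated at $k=0$ when integrating the reflection part by parts cancel exactly against those generated at $h=0$ by the band part. This cancellation is the central structural feature of the steplike (as opposed to decaying) theory; it is precisely what the paper establishes in its time-dependent analysis in Section 3, namely identity \eqref{eq3.22} (equivalently \eqref{eq3.21}), proved via \eqref{eq3.24}, which states that $\lim_{k\to+0}\re\{\I^{l+1}R_k^{(l)}(k)\}=\lim_{h\to+0}P_h^{(l)}(h)$ with $P$ defined in \eqref{eq3.19}. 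Your own discussion flags $k=0$ as the place where one must check that ``no boundary contribution'' arises, but boundary contributions do arise there, and they are cancelled by the very term your decomposition has already thrown away.

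There is also a quantitative gap in the smoothness count for $F_r$. The best raw decay obtainable from \eqref{inithor} is $R(\la)=O(k^{-n_0-1})$ (one power from each integrable derivative of $q$, starting from $O(k^{-1})$), so $k^{n_0+1}R(k)$ is merely bounded and in particular not in $L^1(\R)$; differentiation under the integral sign therefore yields only $F_r\in C^{n_0-1}$, two orders short of the claimed $C^{n_0+1}$. Closing this gap requires the refined structure of $R$ at infinity (as in the estimate quoted in Section 3, where $R$ and its $k$-derivatives equal $\la^{-(n_0+1)/2}$ times a factor $g$ with $\sqrt{\la}\,g\in L^2$, which restores summability at the top order), or else abandoning the direct Fourier route altogether. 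Note finally that the paper itself gives no proof of Lemma \ref{lemma 2.8}: it is quoted from Marchenko's monograph \cite{M} (and, for steplike potentials, \cite{EGLT}), where the properties of $F,F_1$ are derived from the transformation kernels $B,B_1$ through the Marchenko equations \eqref{March} rather than by Fourier analysis of $R$; that route builds in both the extra derivative ($B(x,0)=\int_x^\infty q$) and the corner cancellation automatically, which is exactly what your plan has to reconstruct by hand.
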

The set of scattering data for the operator $L$ can be defined as follows:
\be\label{scatdata}\aligned
\mathcal S(m_0, n_0):=&\big\{R(\la), T(\la), k\in\R;\  R_1(\la), T_1(\la), k_1\in \R; \\ & \quad \la_1, \dots,\la_N\in (-\infty,-c^2),\ \gamma_1^2, \gamma_{1,1}^2, \dots, \gamma_N^2, \gamma_{N,1}^2\big\}.
\endaligned\ee
The functions $F(x)$ and $F_1(x)$ are uniquely defined by this set. 
\begin{theorem}[\cite{EGLT}] \label{charak}
Properties {\bf I}--{\bf IV} of Lemmas~\ref{1-3} and \ref{lemma 2.8} are necessary and sufficient for 
$\mathcal S(m_0, n_0)$ to be the set of scattering data for the Schr\"odinger operator $L$ with potential $q(x)$ satisfying \eqref{inithor} for any $m_0\geq 1$ and $n_0\geq 0$.
\end{theorem}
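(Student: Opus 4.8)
The plan is to treat the two directions separately. Necessity is nothing new: it is exactly what Lemmas~\ref{1-3} and \ref{lemma 2.8} assert, namely that the scattering data of any $q$ satisfying \eqref{inithor} obey {\bf I}--{\bf IV}. So all the work is in sufficiency. Given an abstract set $\mathcal S(m_0,n_0)$ whose entries satisfy {\bf I}--{\bf IV}, I would reconstruct a potential by the Marchenko procedure. First form $F$ and $F_1$ from the prescribed data via \eqref{defF}; by property {\bf IV} these lie in $C^{n_0+1}(\R)$ with the stated weighted $L^1$-decay of all derivatives up to order $n_0+1$, so the integral operators $\mathcal F_x\colon h\mapsto\int_x^{\infty}F(\cdot+s)\,h(s)\,ds$ and its left analogue are well defined and Hilbert--Schmidt on the relevant half-lines.

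The first genuine step is unique solvability of both Marchenko equations \eqref{March}. Since $\mathcal F_x$ is compact, the Fredholm alternative reduces solvability to injectivity of $\id+\mathcal F_x$, and injectivity is the standard place where the scattering relations enter. The unitarity-type identities {\bf I(c)}, {\bf I(d)}, the symmetry {\bf I(b)} governing the multiplicity-one band $[-c^2,0]$, and the positivity of the norming constants $\gamma_j^2,\gamma_{j,1}^2$ are exactly what is needed to force the quadratic form of $\id+\mathcal F_x$ to be positive, excluding a nontrivial kernel. This yields the kernels $K(x,\cdot),K_1(x,\cdot)$, from which I would define the potential via $q(x)=-2\frac{d}{dx}K(x,x)$, equivalently $q(x)=-c^2+2\frac{d}{dx}K_1(x,x)$ from the left problem; the two representations of the same $q$ being the compatibility requirement addressed below.

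Next I would transfer regularity and decay from $F,F_1$ to $q$. Differentiating \eqref{March} and using {\bf IV}, one propagates the weighted bounds from $F$ to $K$ on the diagonal through the convolution structure, exactly as in the decaying case \cite{M}; this gives $q\in C^{n_0}(\R)$ together with the weighted integrability \eqref{inithor} with the \emph{same} pair $(m_0,n_0)$, the background constant $-c^2$ at $-\infty$ being recovered from the second identity in \eqref{form1}.

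The genuinely steplike part, and the step I expect to be the main obstacle, is to verify that the reconstructed $q$ has \emph{precisely} the prescribed scattering data, i.e.\ that the left and right reconstructions agree and reproduce $R,R_1,T,T_1$ and $\{\la_j,\gamma_j^2,\gamma_{j,1}^2\}$. Here the analytic structure {\bf II} is essential: the function $W(\la)$ assembled from $T,T_1$ must glue the two half-line problems, its zeros in $\mathcal D$ must coincide with the $\la_j$ carrying the residue normalization $(W'(\la_j))^{-2}=\gamma_j^2\gamma_{j,1}^2$, and the resonance behaviour {\bf II}$(iii)$ at $-c^2$ must match. Concretely, I would build the Jost solutions of the constructed operator, form their Wronskian, and check, using {\bf I(a)}--{\bf (d)}, {\bf III}, and the $|T_1|^2$-term in $F$ encoding the multiplicity-one continuous spectrum on $[-c^2,0]$, that the resulting reflection and transmission coefficients equal the input and that no spurious eigenvalues appear. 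Uniqueness of a potential with prescribed scattering data then closes the argument; the detailed execution of this consistency step is where I would expect to follow \cite{EGLT} most closely.
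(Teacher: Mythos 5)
Your proposal follows essentially the same route as the paper's own (sketched) argument: necessity is exactly the content of Lemmas~\ref{1-3} and \ref{lemma 2.8}, while sufficiency proceeds by uniquely solving both Marchenko equations \eqref{March} (the paper cites \cite{M} where you supply the Fredholm/positivity details), reconstructing $q_+$ and $q_-$ with the transferred regularity and weighted decay, and then establishing the crucial compatibility $q_+(x)=q_-(x)-c^2$ together with the fact that the reconstructed operator reproduces the prescribed data, which the paper likewise identifies as the main step and defers to \cite{EGLT}. Your identification of where the genuinely steplike difficulty lies matches the paper's emphasis exactly.
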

The sufficiency of these properties is established as follows. Given arbitrary $m_0\geq 1$ and $n_0\geq 0$ and the set $\mathcal S(m_0, n_0)$ consisting of four functions and 3N numbers as in \eqref{scatdata}, assume that this set satisfies properties {\bf I} to {\bf IV}. Then the Marchenko equations \eqref{March} are uniquely solvable (\cite{M}) for the kernels of the transformation operators $K$ and $K_1$. Moreover, the functions (cf.\ \eqref{form1})
\[ q_+(x)=-2\frac{d}{dx} K(x,x)=-\frac{d}{dx} B(x,0), \quad q_-(x)=2\frac{d}{dx} K_1(x,x)=\frac {d}{dx} B_1(x,0),\]
are such that $q_\pm\in C^{n_0}(\R)$ and (\cite{M})
\[\int_{\R_\pm}\left|\frac{d^n}{dx^n} q_\pm(x)\right|(1+|x|^{m_0})dx<\infty, \quad 0\leq n\leq n_0.\]
The last and most important step establishes that $q_+(x)=q_-(x) - c^2:=q(x)$ (cf.\ \cite{EGLT}), that is, $q(x)$ satisfies \eqref{inithor}. Moreover, the  operator $L$ with potential $q(x)$ reconstructed by use of the Marchenko equations, 
has the chosen set $\mathcal S(m_0, n_0)$ as the set of scattering data.
This scattering theory forms the basis of the inverse scattering transform which we apply in the next section to prove Theorem \ref{thoern}.

\section{The Cauchy problem for the KdV equation with steplike initial data of the class (\ref{inithor}), (\ref{estbig})}
We first recall some well-known facts about the Lax pair. By a classical solution of the KdV equation we mean a solution that has $3$ continuous derivatives with respect to $x$ and one with respect to $t$. Let $v(x,t)$ be such a solution. Introduce the Lax operators
\[L_v(t)=-\pa_x^2 + v(x,t), \quad P_v(t)=-4\pa_x^3 + 6 v(x,t)\pa_x + 3v_x(x,t).\] 
As is known (\cite{Lax}), the KdV equation is equivalent to the Lax equation
\[\pa_t L_v(t)=[P_v(t),\ L_v(t)]\]
considered in $H^5(\R)$. It implies the unique solvability  of the compatibility system
\be\label{comp} L_v(t)u=\la u, \quad u_t= P_v(t)u,\ee for any initial condition $u(\la,0,0)$ and $u_x(\la, 0,0)$. In turn,  if $u_1$ and $u_2$ are two solutions of \eqref{comp}, then their Wronskian does not depend either on $x$ or on $t$. Let $\psi_\pm(\la, x,t)$  be two Weyl solutions of the equation $L_v(t)u=\la u$ normalized by the condition $\psi_\pm(\la,0,t)=1$ and let $m_\pm(\la,t)=\frac{\pa}{\pa x}\psi_\pm(\la,0,t)$ be the Weyl functions.
\begin{lemma}[\cite{EGT}] Set
\[
\alpha_\pm(\la,t)=\exp\left(\int_0^t\Big(2\big(v(0,s) + 2\la\big)m_\pm(\la,s) - v_x(0,s)\Big)ds\right).
\]
Then the functions $u_\pm(\la,x,t)=\alpha_\pm(\la,t)\psi_\pm(\la,x,t)$ solve  \eqref{comp}.
\end{lemma}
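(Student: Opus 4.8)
The plan is to verify the two equations of the compatibility system \eqref{comp} in turn. The first, $L_v(t)u_\pm=\la u_\pm$, is immediate: since $\alpha_\pm(\la,t)$ is independent of $x$ and $\psi_\pm$ solves $L_v(t)\psi_\pm=\la\psi_\pm$ by construction, we get $L_v(t)u_\pm=\alpha_\pm L_v(t)\psi_\pm=\la\,\alpha_\pm\psi_\pm=\la u_\pm$. Thus everything reduces to the evolution equation $u_{\pm,t}=P_v(t)u_\pm$.

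The key observation is that the Lax equation forces $\chi_\pm:=\psi_{\pm,t}-P_v(t)\psi_\pm$ to solve the same stationary problem $(L_v(t)-\la)y=0$. Indeed, differentiating $L_v\psi_\pm=\la\psi_\pm$ in $t$ and inserting $\pa_t L_v=[P_v,L_v]$ together with $L_v\psi_\pm=\la\psi_\pm$ yields $(L_v-\la)\psi_{\pm,t}=-[P_v,L_v]\psi_\pm=(L_v-\la)(P_v\psi_\pm)$, hence $(L_v-\la)\chi_\pm=0$. This step is a short computation.

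The hard part will be to show that $\chi_\pm$ lies in the one-dimensional Weyl subspace, i.e.\ $\chi_\pm=g_\pm(\la,t)\psi_\pm$ for some factor $g_\pm$ independent of $x$. This is exactly where the Weyl character of $\psi_\pm$ must be used: the solution space of $(L_v(t)-\la)y=0$ is two-dimensional, and I must rule out the branch that grows at the relevant end. Since $v(x,t)\to 0$ as $x\to+\infty$ and $v(x,t)\to -c^2$ as $x\to -\infty$, the operator $P_v$ degenerates to a constant-coefficient one near that end, so $P_v\psi_\pm$ inherits the exponential decay of $\psi_\pm$; a matching estimate, using that the decay exponent is fixed by the $t$-independent background, shows $\psi_{\pm,t}$ decays at the same rate. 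Equivalently, the ($x$-independent) Wronskian $W(\psi_\pm,\chi_\pm)$, evaluated in the limit $x\to\pm\infty$, vanishes. I expect making these asymptotics uniform in $t$ to be the most delicate point; for $\la$ in the resolvent set the decaying/growing dichotomy is clean, and the spectrum is handled by continuity.

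Once the proportionality holds, the factor $g_\pm$ is found by evaluating at $x=0$. The normalization $\psi_\pm(\la,0,t)\equiv 1$ gives $\psi_{\pm,t}(\la,0,t)=0$, whence $g_\pm=\chi_\pm(\la,0,t)=-(P_v\psi_\pm)(\la,0,t)$. Using the ODE to rewrite the boundary derivatives, namely $\psi_{\pm,x}(\la,0,t)=m_\pm$, $\psi_{\pm,xx}(\la,0,t)=v(0,t)-\la$, and $\psi_{\pm,xxx}(\la,0,t)=v_x(0,t)+(v(0,t)-\la)m_\pm$, the third-order operator $P_v$ collapses to
\[(P_v\psi_\pm)(\la,0,t)=2\big(v(0,t)+2\la\big)m_\pm(\la,t)-v_x(0,t),\]
so that $g_\pm=-\big(2(v(0,t)+2\la)m_\pm-v_x(0,t)\big)$. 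Finally, from $u_\pm=\alpha_\pm\psi_\pm$ we obtain $u_{\pm,t}-P_v u_\pm=\alpha_{\pm,t}\psi_\pm+\alpha_\pm\chi_\pm=(\alpha_{\pm,t}+\alpha_\pm g_\pm)\psi_\pm$, and the definition of $\alpha_\pm$ is precisely the one making $\pa_t\log\alpha_\pm=-g_\pm$. Hence the bracket vanishes, giving $u_{\pm,t}=P_v(t)u_\pm$ and completing the verification of \eqref{comp}.
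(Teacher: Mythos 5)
The paper itself offers no proof of this lemma---it is quoted verbatim from \cite{EGT}---so there is nothing internal to compare against; your argument is, however, correct and is essentially the standard one used in that reference: reduce to the evolution equation, show via the Lax equation that $\chi_\pm=\psi_{\pm,t}-P_v\psi_\pm$ solves $(L_v-\la)\chi_\pm=0$, use the Weyl property (decay at the relevant end, vanishing Wronskian) to conclude $\chi_\pm=g_\pm\psi_\pm$, and then evaluate at $x=0$ with the normalization $\psi_\pm(\la,0,t)=1$ to identify $g_\pm=-\big(2(v(0,t)+2\la)m_\pm-v_x(0,t)\big)$, which is exactly what the exponent in $\alpha_\pm$ compensates. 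Your boundary computation $(P_v\psi_\pm)(\la,0,t)=2(v(0,t)+2\la)m_\pm-v_x(0,t)$ checks out, and you correctly flag the one genuinely technical point (decay of $\psi_{\pm,t}$, i.e.\ that $\chi_\pm$ stays in the Weyl subspace uniformly in $t$), which is where the cited paper invests its analytic work.
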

In particular,  $u(\la,x,t)=\E^{\I k x + 4 \I k^3 t}$ solves \eqref{comp} with $v(x,t)=0$ and $u_1(\la,x,t)=\E^{-\I k_1 x - 4 \I k^3_1 t + 6\I c^2 k_1 t}$ solves \eqref{comp} with $v(x,t)=-c^2$.
For these solutions, 
\be\label{al}
\alpha_+(\la,t)=:\alpha(\la,t)=\E^{4\I k^3 t}, \quad \alpha_-(\la,t)=:\alpha_1(\la,t)=\E^{- 4 \I k^3_1 t + 6\I c^2k_1 t}.
\ee
These properties allow us to derive the evolution  of the scattering data with respect to time. Indeed,
assume that there exists a classical solution $q(x,t)$ of the Cauchy problem \eqref{kdv}, \eqref{inithor} for some 
pair $(m_0, n_0)$ and assume that $q(x,t)$ has at least a finite first moment of perturbation, that is,
\be\label{thor1}\int_0^{+\infty} (1 +|x|)\left(\left|\frac{\pa^n}{\pa x^n}q(x,t)\right| +\left|\frac{\pa^n}{\pa x^n}(q(-x,t)+c^2)\right|\right)dx<\infty,\quad  0\leq n\leq 3.\ee
Then we can apply the results of the previous section to the time-dependent Schr\"{o}\-dinger operator $L_q(t)$. In particular, one can construct the Jost solutions of the equation $-\pa_x^2 y + q(x,t)y=\la y$ by 
\be\label{jostdep}\aligned
\phi(\la,x,t)&=\E^{\I k x} + \int_x^{+\infty}K(x,y,t)\E^{\I k y} dy,\\
\phi_1(\la,x,t)&=\E^{-\I k_1 x} + \int^x_{-\infty}K_1(x,y,t)\E^{-\I k_1 y} dy,\endaligned\ee
and introduce their time-dependent Wronskian $W(\la,t)$ and the scattering relations,
\[\aligned T(\la,t)\phi_1(\la, x,t)&=\ol{\phi(\la,x,t)} + R(\la,t)\phi(\la,x,t),\quad k\in\R,\\
T_1(\la,t)\phi(\la, x,t)&=\ol{\phi_1(\la,x,t)} + R_1(\la,t)\phi_1(\la,x,t),\quad k_1\in\R.
\endaligned\]
The left/right normalizing constants now also depend on $t$ and are given by
\[\gamma_j^2(t):=\frac{1}{\int_{\R}\phi^2(\la_j,x,t) dx},\quad \gamma_{j,1}^2(t):=\frac{1}{\int_{\R}\phi_1^2(\la_j,x,t) dx}.\]
Since $\alpha(\la,0)=\alpha_1(\la,0)=1$  by \eqref{al}, it follows from the considerations above that 
\be\label{dynn}\aligned W(\la,t)&=\frac{W(\la,0)}{\alpha(\la,t)\alpha_1(\la,t)},\quad T_1(\la,t)=T_1(\la,0)\alpha(\la,t)\alpha_1(\la,t), \\
R_1(\la,t)&=R_1(\la,0)\alpha_1^2(\la,t),\quad R(\la,t)=R(\la,0)\alpha^2(\la,t),\endaligned
\ee 
and the time evolution of the scattering data is therefore given by (cf.\ \cite{Kh, EGT})
\begin{equation}
\begin{split}
\label{evol}
R(\la,t)&=R(\la)\E^{8\I k^3 t}, \\
T(\la,t)&= T(\la)\E^{4\I k^3 t- 4 \I k^3_1 t + 6\I c^2k_1 t}, \\
\gamma_j^2(t)&=\gamma_j^2 \E^{8 \kappa_j^3 t},
\end{split}
\quad 
\begin{split}
R_1(\la,t)& = R_1(\la)\E^{- 8 \I k^3_1 t + 12\I c^2k_1 t}, \\ 
T_1(\la,t)&=T_1(\la)\E^{4\I k^3 t- 4 \I k^3_1 t + 6\I c^2k_1 t}, \\ 
\gamma_{j,1}(t)&=\gamma_{j,1}\E^{-(8\kappa_{j,1}^3 + 12 c^2\kappa_{j,1})t}.
\end{split}
\end{equation}
Here we have abbreviated $R(\lambda)=R(\lambda,0)$, etc.
On the spectrum of multiplicity one, 
\[ |T_1(\la,t)|^2 = |T_1(\la)|^2\E^{8\I k^3 t}, \quad \la\in [-c^2,0]\pm\I 0.
\]
Therefore the time-dependent Marchenko equations have the form
\be\label{March1}
\aligned K(x,y,t) + F(x+y,t) + \int_x^{+\infty} K(x,s,t) F(y+s,t)ds&=0,\quad y>x,\\
K_1(x,y,t) + F_1(x+y,t) + \int^x_{-\infty} K_1(x,s,t) F_1(y+s,t)ds&=0,\quad y<x,\endaligned
\ee
where
\be\label{defF1}
\aligned F(x,t)=&\frac{1}{2\pi}\int_{\R} R(\la)\E^{\I k x+8\I k^3 t} dk +\frac{1}{2\pi}\int_{-c^2}^0 \frac{k|T_1(\la)|^2}{|k_1|}\E^{\I k x+8\I k^3 t}d k \\
&+\sum_{j=1}^N\gamma_j^2\E^{-\kappa_j x+8\kappa_j^3 t},\\
F_1(x,t)=&\frac{1}{2\pi}\int_{\R} R_1(\la)\E^{-\I k_1 x-8\I k_1^3 t + 12\I c^2 k_1 t}d k_1 
+\sum_{j=1}^N\gamma_{j,1}^2 \E^{\kappa_{j,1} x-8\kappa_{j,1}^3t -12 c^2\kappa_{j,1}t}.
\endaligned
\ee
Formulas \eqref{evol}--\eqref{defF1} were obtained assuming that $q(x,t)$ exists and tends to the background 
constants as in \eqref{thor1}. To prove that $q(x,t)$ indeed exists and is unique we have to check that the set
\be\label{scatdatat}\aligned
\mathcal S(t):=&\big\{R(\la,t), T(\la,t), k\in\R;\  R_1(\la,t), T_1(\la,t), k_1\in \R; \\ & \quad
\la_1, \dots, \la_N\in (-\infty,-c^2), \gamma_1^2(t), \gamma_{1,1}^2(t), \dots, \gamma_N^2(t), \gamma_{N,1}^2(t)\big\}
\endaligned\ee
defined via \eqref{evol}--\eqref{defF1} and \eqref{scatdata} (corresponding to the initial data) satisfies the 
necessary and sufficient conditions {\bf I}--{\bf IV}. It may happen that  $\mathcal S(t)$ will have less moments and derivatives 
than the initial data, that is, $\mathcal S(t)=\mathcal S(m_0(t), n_0(t))$ with $1\geq m_0(t)\geq m_0$, $3\geq n_0(t)\geq n_0$. This however will still guarantee unique solvability of \eqref{March1} and the equality $q(x,t)\equiv q_1(x,t)$, 
where
\[ q(x,t)=-2\frac{d}{dx}K(x,x,t),\quad q_1(x,t)=2\frac{d}{dx} K_1(x,x,t) - c^2, \quad x\in\R,
\]
as well as the required decay of these functions to their backgrounds.
Let us first check properties {\bf I}--{\bf III}.  Properties {\bf I.(a), (b)} follow immediately from 
\be\label{zum}
\aligned\alpha(\la+\I 0, t)& =\ol{\alpha(\la-\I 0, t)}=\alpha^{-1}(\la+\I 0, t), \quad  \la\in [0,\infty);\\ \alpha_1(\la+\I 0, t) &=\ol{\alpha_1(\la-\I 0, t)}=\alpha^{-1}_1(\la+\I 0, t), \quad \la\in[-c^2, \infty);\\ 
 \alpha(\la-\I 0, t)& =\alpha(\la+\I 0, t)\in\R, \quad \la\in [-c^2,0].\endaligned\ee
Since $|\alpha(\la,t)|=|\alpha_1(\la,t)|=1$ as $\la\in\R_+$, then the moduli of the entries of the scattering matrix do not depend on time on the spectrum of multiplicity 2, i.e. when $k\in\R$. This proves {\bf I.(c)}. Property {\bf I.(d)} follows from \eqref{dynn} and \eqref{zum}. Finally, for $k\to\infty$
\begin{align*}
\log(\alpha(\la,t)\alpha_1(\la,t))&=4\I (k^3 - k_1^3)t +6\I c^2k_1 t=O(k^{-1}),\\ 
\alpha(\la,t)\alpha_1(\la,t)&= 1+O(k^{-1}),
\end{align*}
which proves {\bf I.(e)}.
To check {\bf II},  note that $\alpha(\la,t)$ and $\alpha_1(\la,t)$ are holomorphic in $\C\setminus [-c^2, \infty)$ and continuous up to the boundary. This proves $(i)$. For $(ii)$ we use 
\[\frac{\pa W(\la,t)}{\pa \la}\Big|_{\la=\la_j}=(\alpha(\la_j,t)\alpha_1(\la_j,t))^{-1}\frac{d W(\la)}{d\la}\Big|_{\la=\la_j},
\] 
which is true since $W(\la_j)=0$. Together with \eqref{evol} this impies $(ii)$,
\[
\left(\frac{\pa W}{\pa \la}\,(\la_j,t)\right)^{-2}=\gamma_j^2(t)\gamma_{j,1}^2(t).
\]
Property {\bf II}, $(iii)$ follows from $\alpha(-c^2,t)\alpha_1(-c^2,t)\in\R\setminus\{0\}$.

The main difficulty in the application of IST to steplike solutions arises in the verification 
of property {\bf IV} and in finding optimal constants $m_0(t), n_0(t)$, which actually do not depend on $t$ as we will see. 

First of all, to differentiate $F(x,t)$ and $F_1(x,t)$ with respect to $x$ we need more decay of the reflection coefficients than established in 
{\bf I.(e)} of Lemma \ref{1-3}. But $R(\la)$ and $R_1(\la)$ are the reflection coefficients of the potential satisfying \eqref{inithor}, \eqref{estbig} and 
for such a potential it was proven in \cite{EGLT} that 
as $\la \to \infty$,
\[
 \frac{d}{dk}R(\la)=g_{+,s}(\la)\la^{-\frac{n_0+1}{2}}, \quad  \frac{d}{dk_1}R_1(\la)=g_{-,s}(\la)\la^{-\frac{n_0+1}{2}},\quad s=0, \dots,m_0-1,
 \]
 where $\sqrt\la\cdot g_\pm(\la)\in L^2 (a, +\infty)$ and $a$ is a sufficiently large positive number.

Let us examine the behavior of $F(x, t)$ as $x \rightarrow +\infty$ and $F_1(x, t)$ as $x \rightarrow -\infty$.
The summands which correspond to the discrete spectrum are exponentially small with respect to large 
$x$ for a fixed $t$, so they do not contribute to the behavior of the kernels. 

\subsection{Asymptotic behavior of the right kernel $F(x, t)$.}
Denote the two summands of $F(x, t)$ in \eqref{defF1} containing the integrals by $F_R(x, t)$ and $F_T(x, t)$. 
Then $F_T(x, t)$ can be rewritten as
\[
F_T(x, t) = \dfrac{1}{4\pi} \int_{-c^2}^0 |T_1(\la)|^2 k_1^{-1} e^{8ik^3t}e^{ikx}d\la = \dfrac{1}{\pi}\int_c^0 P(h) \psi(h) e^{-hx} dh,
\]
where $\sqrt{\la} = k = \I h$, $\psi(h)=e^{8h^3t}$ and
\begin{equation} \label{eq3.19}
P(h) = \dfrac{ \I h W(\phi_1(\la, \cdot, 0), \ol{\phi_1(\la, \cdot, 0)})}{W(\phi_1(\la, \cdot, 0), \phi(\la, \cdot, 0)) W(\ol{\phi_1(\la, \cdot, 0)}, \phi(\la, \cdot, 0))}.
\end{equation}
Using the symmetry property $R(\la(k))=\ol{R(\la(-k))}$ we can write
\begin{equation*}
F_R(x, t) = \dfrac{1}{\pi} \re \int_0^{+\infty}R(k)\psi(- \I k) e^{\I kx}dk,
\end{equation*}
where $R(k): = R(\la(k), 0)$ and $k \geq 0$.
Note that $P(h)$ does not depend on $t$, and $P(h)<0$ when $h \in (0, c)$. Since time $t$ is fixed, we omit $t$ in the notation of $\psi(h)$.
Let us abbreviate $\psi_s^{(j)}=\frac{\partial^j}{\partial s^j} \psi(h(s), t)$ and $\psi_s' = \frac{\partial}{\partial s} \psi(h(s), t)$.
Evidently,
\begin{equation} \label{eq3.20}
\psi_h^{(j)}(0) = \I^j \psi_k^{(j)}(0) \in \R.
\end{equation}
Integrating $F_R(x, t)$ by parts yields
\begin{equation*}
\begin{aligned}
\re \int_0^{\infty} R(k)\psi(-\I k)e^{\I kx}dk = \re \bigg\{-\dfrac{1}{\I x}R(0)\psi(0) + \dfrac{1}{(\I x)^2}(R_k' \psi + R \psi_k^\prime)(0) \\
+ \dots +  \frac{(-1)^m}{(ix)^m}\dfrac{\partial^{m-1}(R\psi)}{\partial k^{m-1}}(0) + \frac{(-1)^m}{(\I x)^m} \int_0^{\infty}\dfrac{\partial^{m}(R\psi)}{\partial k^{m}}e^{\I kx}dk \bigg\}.
\end{aligned}
\end{equation*}
We split $F_T(x, t)$ into a sum of two integrals, $\int_0^{\frac{c}{2}}  + \int_{\frac{c}{2}}^c$.
The second integral is evaluated as $O(e^{-\frac{c}{2}x})$, and this estimate can be differentiated with respect to 
$x$, which is sufficient for our purpose. On the other hand, $\phi_1(\la,x,t)$ does not have bounded derivatives with respect to $h$ at $h=c$, therefore $|T_1(\la,t)|^2$ is not differentiable too. Integrating  the first integral by parts yields for 
$\pi F_T(x, t)$
\begin{equation*}
\begin{aligned}
\int_c^0 P(h) \psi(h) \E^{-hx} dh &= - \frac{1}{x}P(0)\psi(0) - \frac{1}{x^2}(P_h'\psi + P\psi_h')(0) - \dots  
\\
& \quad - \frac{1}{x^m}\dfrac{\partial^{m-1}(P\psi)}{\partial h^{m-1}}(0) + \dfrac{1}{x^m} \int_{\frac{c}{2}}^0 \dfrac{\partial^{m}(P\psi)}{\partial h^{m}}e^{-hx}dh + O(\E^{-\frac{c}{2}x}),
\end{aligned}
\end{equation*}
where the term $O(e^{-\frac{c}{2}x})$ contains the second integral and integrands of the first integral 
corresponding to $h = \frac{c}{2}$.

Our next aim is to prove that
\begin{equation} \label{eq3.21}
\lim_{k\rightarrow +0} \bigg\{ \I^{j+1} \dfrac{\partial^{j}(R\psi)}{\partial k^{j}}(k) \bigg\} = \lim_{h\rightarrow+0} \dfrac{\partial^{j}(P\psi)}{\partial k^{j}}(h).
\end{equation}
The limit on the right side is taken from the side of the spectrum of multiplicity one. 
Since $\psi \in C^{\infty}(\R)$, using \eqref{eq3.20} we get
\begin{equation*}
\dfrac{\partial^{j}(P\psi)}{\partial k^{j}}(+0) = \sum_{s=0}^j C_j^s P_h^{(j-s)}(+0)\I^s \psi_k^{(s)}(0).
\end{equation*}
Here $C_j^s$ are the binomial coefficients. Compare this formula with 
\begin{equation*}
\re \bigg\{ \I^{j+1} \dfrac{\partial^{j}(R\psi)}{\partial k^{j}}(+0) \bigg\} = 
\sum_{s=0}^j C_j^s \I^{j+1-s} R_k^{(j-s)}(+0)\I^s \psi_k^{(s)}(0),
\end{equation*}
and taking into account that $\I^s \psi_k^{(s)}(0) \in \R$, we see that  (\ref{eq3.21}) is equivalent to
\begin{equation} \label{eq3.22}
\lim_{k\rightarrow +0} \re\bigg\{ \I^{l+1}R_k^{(l)}(k)\bigg\} = \lim_{h\rightarrow+0} P_h^{(l)}(h).
\end{equation}
To prove this, recall that $\phi_1(x, \la, 0)$ and $\overline{\phi_1(x, \la, 0)}$ are $m_0$-times differentiable by $k$ in a neighborhood of the point $k=0$. The function
 $\phi(x, \la, 0)$ has $m_0-1$ derivatives in $k$ at $k = 0$. We define $P_1(k) = P(-\I k)$, where $P$ is determined by (\ref{eq3.19}). The function $P_1(k)$ is defined on the right side of $[0, ic]$, but can be redefined by the same formula to the right of the point $k=0$. Moreover, it has $m_0-1$ continuous derivatives at $k=0$, namely
\begin{equation*}
\lim_{k\rightarrow+0} \I^l \dfrac{d^l P_1(k)}{dk^l} = \lim_{h\rightarrow+0} \dfrac{d^lP}{dh^l}.
\end{equation*}
Now we extend $P_1$ for $k\geq 0$. We can see that
\begin{equation*}
P_1(k) = \dfrac{\I W(\phi_1, \ol{\phi_1})(-2\I k)}{2W(\phi_1, \phi) W(\ol{\phi_1}, \phi)} = \dfrac{\I W(\phi_1, \ol{\phi_1})W(\phi, \overline{\phi})}{2W(\phi_1, \phi) W(\ol{\phi_1}, \phi)}.
\end{equation*}
Substituting
\begin{equation*}
\phi = \dfrac{W(\phi, \overline{\phi_1)}}{W(\phi_1, \overline{\phi_1})}\phi_1 + \dfrac{W(\phi, \phi_1)}{W(\phi_1, \overline{\phi_1})} \ol{\phi_1}
\end{equation*}
in the numerator of the previous fraction we get
\begin{equation} \label{eq3.23}
P_1(k) = \dfrac{\I}{2} \left( -\dfrac{W(\phi_1, \ol{\phi})}{W(\phi_1, \phi)} + \dfrac{W(\ol{\phi_1}, \ol{\phi})} {W(\ol{\phi_1}, \phi)} \right)=
\dfrac{\I}{2} \left( -\dfrac{V(k)}{W(k)} + \frac{\ol{W(k)}}{\ol{V(k)}}\right),
\end{equation}
where
\begin{equation*}
W(k) = W(\phi_1, \phi), \qquad
V(k) = W(\phi_1, \ol{\phi}).
\end{equation*}
This representation is only possible for $k \geq 0$ because $\ol{\phi(\la, x, 0)}$ cannot be continued to $[0, \I c]$.

\begin{lemma}
The following equality is valid,
\begin{equation} \label{eq3.24}
V_k^{(s)} (0) = (-1)^s W_k^{(s)} (0), \qquad  s= 0, \dots, m_0 -1.
\end{equation}
\end{lemma}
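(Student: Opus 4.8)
The plan is to exhibit an exact reflection symmetry $k\mapsto -k$ relating $V$ and $W$ on a two-sided real neighborhood of the origin, from which \eqref{eq3.24} follows immediately by differentiating $s$ times and evaluating at $k=0$. The whole point is that, although $\phi$ and $\ol{\phi}$ are genuinely different as analytic objects (the latter cannot be continued to $[0,\I c]$), as functions of the real variable $k$ near $0$ they are simply each other's reflection.

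First I would regard $\phi(\la,x,0)$ and $\phi_1(\la,x,0)$ as functions of the real variable $k$ in a neighborhood of $k=0$ through their representations \eqref{jostdep} with $t=0$, observing that these integral formulas make sense for real $k$ of either sign, not merely for $k\in\ol{\C^+}$. Since the kernels $B$ and $B_1$ are real-valued, complex conjugation for real $k$ amounts to the substitution $k\to -k$ in the representation of $\phi$:
\[
\ol{\phi(\la,x,0)}=\phi(\la,x,0)\big|_{k\to -k},\qquad \ol{\phi_x(\la,x,0)}=\phi_x(\la,x,0)\big|_{k\to -k}.
\]
Next I would use that $\phi_1$ depends on $k$ only through $\la=k^2$ and $k_1=\sqrt{k^2+c^2}$, both even in $k$; hence $\phi_1$ and $\phi_{1,x}$ are even, $\phi_1(\la,x,0)\big|_{k\to -k}=\phi_1(\la,x,0)$. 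Because $k^2+c^2\geq c^2>0$, the map $k\mapsto k_1$ is analytic near $k=0$, so $\phi_1$ keeps the $C^{m_0}$-smoothness in $k$ quoted above, while $\phi$ is $C^{m_0-1}$ in $k$ there.

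Substituting these two symmetry relations into the Wronskian then gives, for real $k$ in a neighborhood of $0$,
\[
V(k)=W(\phi_1,\ol{\phi})=W\big(\phi_1(\la,x,0)\big|_{k\to -k},\,\phi(\la,x,0)\big|_{k\to -k}\big)=W(\phi_1,\phi)\big|_{k\to -k}=W(-k),
\]
where $W(-k)$ denotes the function $k\mapsto W(\phi_1,\phi)$ evaluated at $-k$. Since $V$ and $W$ are thereby honest $C^{m_0-1}$ functions of the real variable $k$ on a full neighborhood of $0$ satisfying $V(k)=W(-k)$ identically, the chain rule yields $V_k^{(s)}(0)=(-1)^s W_k^{(s)}(0)$ for $0\leq s\leq m_0-1$, which is \eqref{eq3.24}.

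The main point to handle with care is the regularity bookkeeping: one must ensure that $\phi$ together with $\phi_x$ is genuinely $C^{m_0-1}$ in $k$ \emph{across} $k=0$, so that $V(k)=W(-k)$ is an equality of $(m_0-1)$-times differentiable functions on a two-sided real neighborhood rather than a mere relation between one-sided boundary values. This is exactly where the $m_0$-moment hypothesis on $q$ enters, via the corresponding moment bounds on $B$ that make the Fourier-type integral in \eqref{jostdep} differentiable in $k$ up to the required order; it is precisely this smoothness that licenses the termwise differentiation producing the sign $(-1)^s$.
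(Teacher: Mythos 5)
Your proof is correct, and it reaches \eqref{eq3.24} by a more direct route than the paper, although both arguments rest on exactly the same two symmetries: realness of the transformation kernels (so that, for real $k$, conjugating $\phi$ amounts to the substitution $k\mapsto -k$) and the fact that $\phi_1$ depends on $k$ only through the even quantities $\la=k^2$ and $k_1=\sqrt{k^2+c^2}$. The paper never writes the reflection identity $V(k)=W(-k)$. Instead it stays entirely on the side $k\geq 0$ and works with even/odd combinations: using the cosine and sine representations it shows that $\phi+\ol{\phi}$, $\partial_x(\phi+\ol{\phi})$, $\phi_1$, $\partial_x\phi_1$ are functions of $k^2$ of class $C^{m_0-1}$, while $\phi-\ol{\phi}$ and $\partial_x(\phi-\ol{\phi})$ are $k$ times such functions; hence $V+W$ is a function of $k^2$ and $V-W$ is $k$ times a function of $k^2$ (the paper's display writes $V-W=f_8(k^2)$ without the factor $k$, but the derivative relations that follow make clear this is what is meant), from which the parity of the one-sided derivatives at $k=+0$ is read off. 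The two formulations carry identical content --- $V(k)=W(-k)$ holds precisely when $V+W$ is even and $V-W$ is odd --- but they trade different things. The paper's version needs only one-sided regularity at $k=0$, matching the one-sided limits $k\to+0$ in \eqref{eq3.22}; your version requires the two-sided $C^{m_0-1}$ smoothness of $\phi$, $\phi_x$ across $k=0$, which you correctly single out as the point needing care. That regularity does hold: negative real $k$ lies on the boundary of $\ol{\C^+}$ and simply parametrizes the lower side $\la-\I 0$ of the cut, the representation \eqref{jostdep} is valid there, and the $m_0$ moments of $q$ give the decay of $B$ needed to differentiate under the integral $m_0-1$ times in $k$ on a full neighborhood of $0$. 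In exchange, your route reduces the conclusion to a one-line chain-rule computation instead of the paper's bookkeeping of even- and odd-order derivative combinations.
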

\begin{proof}
Denote by  $f_n(x)$  any function such that $f_n(x) \in C^{m_0-1}[0, \epsilon)$.
Since
\begin{equation*}
\phi(\la, x) + \ol{\phi(\la, x)} = 2 \cos(kx) + 2\int_x^{\infty} K(x, y)\,\cos (ky) dy,
\end{equation*}
then
\begin{equation*}
\phi(\la, x) + \ol{\phi(\la, x)} = f_1(k^2), \quad \dfrac{\partial}{\partial x}(\phi(\la, x) + \ol{\phi(\la, x)}) = f_2(k^2).
\end{equation*}
Also $\phi_1(\la, 0)=f_3(k^2)$ and $\tfrac{\partial}{\partial x}\phi_1(\la, 0)=f_4(k^2)$.
On the other hand,
\begin{equation*}
\phi(\la, x) - \ol{\phi(\la, x)} = 2\I k \left( \dfrac{\sin kx}{k} + \int_x^{\infty} K(x, y) \dfrac{\sin ky}{k} dy\right),
\end{equation*}
thus
\begin{equation*}
\phi(\la, x) - \ol{\phi(\la, x)} = kf_5(k^2), \quad \dfrac{\partial}{\partial x}(\phi(\la, x) - \ol{\phi(\la, x)}) = kf_6(k^2).
\end{equation*}
Therefore
\begin{equation*}
V(k) + W(k) = f_7(k^2), \qquad V(k) - W(k) = f_8(k^2).
\end{equation*}
From the last two equalities it follows that
\begin{equation*}
W_k^{(2s)}(k) -V_k^{(2s)}(k) =k f_{2s+9}(k^2), \quad V_k^{(2s+1)}(k) + W_k^{(2s+1)}(k) =k f_{2s+10}(k^2),
\end{equation*}
which proves (\ref{eq3.24}).
\end{proof}
Denote $w_s:=\lim_{k\rightarrow+0}\tfrac{d^sW(k)}{dk^s}$ and let
\begin{equation*}
W_{m_0-1}(k) = w_0 + w_1k + \dfrac{w_2}{2!}k^2 + \dots + \dfrac{w_{m_0-1}}{(m_0-1)!}k^{m_0-1}
\end{equation*}
be the Taylor polynomial for $W(k)$. From the previous lemma it follows that
\begin{equation*}
V(k) = W_{m_0-1}(-k) + o(k^{m_0-1}).
\end{equation*}
Since $k=0$ is an interior point of the spectrum, implying that $W(0) \neq 0$, and since by definition $R(k) = -\tfrac{V(k)}{W(k)}$, we see that
\begin{equation*}
R^{-1}(k) = R_{m_0-1}(-k) + o(k^{m_0-1}),
\end{equation*}
where $R_{m_0-1}(k)$ is a Taylor polynomial for $R(k)$ of degree $m_0-1$. Using  \eqref{eq3.23} we get
\begin{equation*}
P_1(k) = \dfrac{\I}{2} \left( R_{m_0-1}(k) - \ol{R_{m_0-1}(-k)} \right) + o(k^{m_0-1}),
\end{equation*}
which proves \eqref{eq3.22}.

The next step consists of verifying how often $F_R$ and $F_T$ can be integrated by parts.
If the initial data belongs to the class $\eqref{inithor}$, the first integral of $F_T$ can be integrated by parts $m_0-1$ times,
\begin{equation*}
F_T(x, t) = A(x, t) + O(e^{-\frac{c}{2}x}) + \dfrac{1}{x^{s-1}}\int_c^0 \dfrac{\pa^{s-1} (P\phi)}{\pa h^{s-1}}e^{-hx}dh,
\end{equation*}
where $A(x, t)$ corresponds to integrands at $h=0$. We have proven already that it cancels out with the integrand terms of $F_R$ at  $k=0$. The function $P$ has only $s-1$ derivatives with respect to $h$ and this procedure stops. The  integral can be differentiated by $x$ any number of times.

Each time we integrate $F_R$ by parts, there appears a new multiplier $3\I k^2t$ in front of $R(k)$, since we 
differentiate $\psi(-\I k)=e^{\I k^3t}$. As the integrand function should remain summable in $L_2(\R_+)$, 
we require $k^{2s-1}R(k) \in L_2(\R_+)$.
If the initial data has enough derivatives, $F_R$ can be integrated $s-1$ times too. Since we are searching for 
the classical solution of KdV, it has to have at least three derivatives. Therefore we require 
$k^{2s+2}R(k) \in L_2(\R_+)$. In summary,
\begin{lemma}\label{got}
Let 
\[\int_0^{+\infty} (1 +|x|^{s})\left(\left|\frac{d^n}{d x^n}q(x)\right| +\left|\frac{d^n}{d x^n}(q(-x)+c^2)\right|\right)dx<\infty,
\quad  0\leq n\leq l, \ s\geq 3. 
\] 
Then for any $m \leq s-1$ and $l \geq 2m+3$, the function $F(x, t)$  can be represented as
\begin{equation*}
F(x, t) = \dfrac{D(x, t)}{x^m} + D_1(x,t),\quad x\rightarrow +\infty, 
\end{equation*}
where $\frac{\pa^j D(x, t)}{\pa x^j} \in L_2(a,+\infty)$,  $0 \leq j \leq l -2m+1$, and 
$\frac{\pa^i D_1(x, t)}{\pa x^i}=O(e^{-\frac{c}{2}x})$ for $i \geq 0$, uniformly for $t \in [-\mathcal{T}, \mathcal{T}]$.
\end{lemma}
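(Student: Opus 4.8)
The plan is to decompose $F(x,t)$ into the three summands of \eqref{defF1}, writing $F = F_R + F_T + F_d$ with $F_d(x,t)=\sum_{j=1}^N\gamma_j^2\E^{-\kappa_j x+8\kappa_j^3 t}$, and to track each piece through $m$ integrations by parts. Since every $\kappa_j=\sqrt{-\la_j}>c$, each term of $F_d$ together with all its $x$-derivatives is $O(\E^{-\frac c2 x})$, uniformly for $t\in[-\mathcal{T},\mathcal{T}]$ (the factor $\E^{8\kappa_j^3 t}$ being bounded there); hence $F_d$ is placed into $D_1$. Likewise, in the splitting $F_T=\frac1\pi\big(\int_0^{c/2}+\int_{c/2}^{c}\big)P(h)\psi(h)\E^{-hx}dh$ the tail $\int_{c/2}^c$ and all its $x$-derivatives are $O(\E^{-\frac c2 x})$, since each derivative inserts a factor $-h$ with $|h|\le c$ while $\E^{-hx}\le\E^{-\frac c2 x}$; this tail, and every boundary term produced at $h=\frac c2$, is added to $D_1$ as well.

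Next I integrate $F_R$ and the first integral $\int_0^{c/2}$ by parts exactly $m$ times. This is legitimate because $m\le s-1$, $P\in C^{s-1}$, and $R$ is $C^{s-1}$ near $k=0$, while the boundary contributions at $k,h\to\infty$ vanish thanks to $R^{(i)}(k)\psi^{(m-i)}(k)\to0$ (using $l\ge 2m+3$). The boundary contributions at $k=0$ and $h=0$ cancel identically by \eqref{eq3.21}, so after extracting the common factor $x^{-m}$ the surviving integrals define $D(x,t)=D_R(x,t)+D_T(x,t)$, with $D_1$ collecting $F_d$ and all exponentially small remnants.

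For the finite-interval piece $D_T(x,t)=\frac{(-1)^m}{\pi}\int_0^{c/2}\frac{\pa^m(P\psi)}{\pa h^m}(h)\,\E^{-hx}dh$ the integrand is bounded (as $P\in C^{s-1}$, $m\le s-1$, and $\psi=\E^{8h^3t}$ is smooth and bounded uniformly in $t\in[-\mathcal{T},\mathcal{T}]$); differentiating $j$ times in $x$ inserts $(-h)^j$, and a direct Laplace-type estimate gives $\frac{\pa^j D_T}{\pa x^j}=O(x^{-(j+1)})=O(x^{-1})$, so all $x$-derivatives of $D_T$ lie in $L_2(a,+\infty)$ with no restriction on $j$. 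The substantive term is $D_R(x,t)=\frac1\pi\re\big\{\I^{-m}\int_0^{\infty}\frac{\pa^m(R\psi)}{\pa k^m}(k)\E^{\I kx}dk\big\}$, where $\psi=\psi(-\I k)=\E^{8\I k^3 t}$ on the line $k\in\R$. Differentiating $j$ times in $x$ brings down $(\I k)^j$, and Leibniz gives $\frac{\pa^m(R\psi)}{\pa k^m}=\sum_{i=0}^m\binom mi R^{(i)}(k)\psi^{(m-i)}(k)$, where on $\R$ each $\psi^{(m-i)}(k)$ is a polynomial in $k$ of degree $2(m-i)$ (coefficients polynomial in $t$) times the unimodular $\E^{8\I k^3 t}$. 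Using the decay $R^{(i)}(k)=g_i(k)\la^{-\frac{l+1}{2}}$ with $k\,g_i\in L_2(a,+\infty)$ from \cite{EGLT}, available for all $i\le m\le s-1$, the worst large-$k$ behaviour of $k^j\frac{\pa^m(R\psi)}{\pa k^m}$ comes from $i=0$ and is of size $k^{j+2m}|R(k)|\sim k^{\,j+2m-l-2}\,(k g_0)$; this lies in $L_2(0,+\infty)$ precisely when $j+2m-l-2\le0$, in particular for every $0\le j\le l-2m+1$. As the integrand is also bounded near $k=0$, Plancherel's theorem yields $\frac{\pa^j D_R}{\pa x^j}\in L_2(a,+\infty)$ in that range, with bounds uniform in $t\in[-\mathcal{T},\mathcal{T}]$ because all $t$-dependence enters through polynomial coefficients and the unimodular factor $\E^{8\I k^3 t}$.

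The main obstacle is this last step: the exact power counting that pits the order $2(m-i)$ of growth from the $\psi$-derivatives against the decay rate of $R^{(i)}$ and the factor $k^j$ from differentiating in $x$, so as to read off the sharp range $0\le j\le l-2m+1$ for $L_2$-membership. Once the $k$-integrand is shown to belong to $L_2(0,+\infty)$, converting this into $L_2(a,+\infty)$-membership of the $x$-function via Plancherel is routine, as are the exponential estimates feeding $D_1$.
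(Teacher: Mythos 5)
Your proposal is correct and follows essentially the same route as the paper: the same splitting of $F$ into $F_R$, the two pieces of $F_T$ cut at $h=c/2$, and the exponentially small discrete part, followed by $m$-fold integration by parts with the $k=0$/$h=0$ boundary terms cancelling via \eqref{eq3.21}, the large-$k$ power counting based on the \cite{EGLT} decay of the derivatives of $R$, and Plancherel to conclude $L_2$-membership. Your explicit Leibniz bookkeeping (giving even the marginally larger range $j\le l-2m+2$) is just a cleaner write-up of the counting the paper sketches before stating the lemma.
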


\subsection{Asymptotic behavior of the left kernel $F_1(x, t)$ in \eqref{defF1}}

We discard terms corresponding to the discrete spectra because they are exponentially small. The right Jost solution and hence the Wronskians in the expressions for $T, T_1$ and $R, R_1$ are functions of the local parameter $\sqrt{k_1 \mp c}$. Thus, if we want to integrate by parts, we need to differentiate the left reflection coefficient by $k_1$, which leads to singularities of type
\begin{equation*}
\dfrac{\pa^s \mathcal{R}(k_1)}{\pa k_1^s} = O\left(\left(k_1\mp c\right)^{-\frac{2s-1}{2}}\right), \quad s\geq 1 \text{ and } k_1 \rightarrow \pm c,
\end{equation*}
where $\mathcal{R}(k_1)=R_1(\la, t)$. Let $0 < \varepsilon < \frac{c}{8}$ and introduce 
\begin{equation*}
\mathcal B_{\pm}(k_1):= \mathcal B\left(\dfrac{k_1 \mp c}{\varepsilon}\right), \quad \text{ where }\mathcal  B(\xi) =
\begin{cases}
(1 - \xi^{2m_0})^{2m_0}, &\text{if } |\xi| \leq 1,\\
0, &\text{if } |\xi| \geq 1.
\end{cases}
\end{equation*}
Obviously $\frac{d^s\mathcal B_{\pm}}{dk_1^s} (\pm c+ \varepsilon) = \frac{d^s\mathcal B_{\pm}}{dk_1^s} ( \pm c-\varepsilon)= 0$, and $\frac{d^s\mathcal B_{\pm}}{dk_1^s} (\pm c) = 0$  for $0\leq s \leq 2m_0-1$.

We rewrite the integral containing $R_1$ in  \eqref{defF1} as 
\[ 
\begin{aligned}
& \int_{\R} R_1(\la)e^{-8ik_1^3t+12ic^2k_1t}e^{-ik_1x}dk_1 = \int_{\R} \left(1 - \mathcal B_+(k_1) - \mathcal B_-(k_1) \right) \mathcal{R}(k_1)e^{-ik_1x}dk_1  \\
& \qquad + \int_{-c-\varepsilon}^{-c+\varepsilon} \mathcal B_-(k_1) \mathcal{R}(k_1)e^{-ik_1x}dk_1 + \int_{c-\varepsilon}^{c+\varepsilon } \mathcal B_+(k_1) \mathcal{R}(k_1)e^{-ik_1x}dk_1 \\
&\quad =: I_1(x,t) + I_2^-(x, t) + I_2^+(x,t).
\end{aligned}
\]

The function $1 - \mathcal B_+(k_1) - \mathcal B_-(k_1)$ has zeros of degree $2m_0-1$ at $k_1=c$ and $k_1 = -c$ and is infinitely differentiable. Since the behavior of $\frac{\pa^m}{\pa k_1^m} (\mathcal{R}(1 - \mathcal B_+ - \mathcal B_-))$ as $k_1 \rightarrow \infty$ is the same as that of $\frac{\pa^m \mathcal{R}(k_1)}{\pa k_1^m}$, $k_1 \rightarrow \infty$, the integral $I_1(x, t)$ 
can be integrated by parts similarly to our previous considerations. 
To evaluate $I_2^{\pm}$, we  focus on $I_2^+$, the evaluation of $I_2^-$ is done in the same way. We start with
the Taylor series for $\mathcal{R}(k_1)$ on the interval $[-\varepsilon+c, \varepsilon+c]$. Let $z^2 = k_1 - c$, then 
\begin{equation*}
\mathcal{R}(c + z^2) = a_0(t )+ a_1(t)z + a_2(t)z^3 + \dots + a_{m_0-1}(t)z^{m_0-1} + \beta(z, t),
\end{equation*}
and $\beta(z, t) = O(z^{m_0})$ as $z \rightarrow 0$. Thus, $\mathcal{R}$ has at least $[\frac{m_0}{2}]$ derivatives with respect to $k_1$ at the point $k_1 = c$. Since $\mathcal B_+$ and its derivatives disappear at $k_1 = -\varepsilon+c$ and $k_1 = \varepsilon + c$, integration by parts gives
\begin{equation*}
I_2^+(x, t) = e^{-\I cx} \sum_{j=0}^{m_0-1}a_j(t) \int_{-\varepsilon}^{\varepsilon} \zeta^{\frac{j}{2}} \mathcal B\left(\dfrac{\zeta}{\varepsilon}\right)e^{-\I x\zeta}d\zeta + \dfrac{h_+(x, t)}{x^{[\frac{m_0}{2}]}},
\end{equation*}
where $h_+(x, t) \in L_2(\R_-)$, it is infinitely differentiable with respect to $x$, and all of the derivatives are also in $L_2(\R_-)$.

Let $\xi = \frac{\zeta}{\varepsilon}$. Using $1 - \xi^{2m} = (1 - \xi^2)(1 +\xi^2 + \xi^{4} + \dots + \xi^{2m-2})$ we get
\begin{equation*}
\begin{aligned}
\left(I_2^+(x, t)  - \dfrac{h_+(x, t)}{x^{[\frac{m_0}{2}]}}\right)\E^{\I c x} &= \sum_{j=0}^{[\frac{m_0}{2}]} \tilde{a}_{2j}(\varepsilon, t) \int_{-1}^1 \xi^{j}\mathcal B(\xi) e^{-\I x \xi \varepsilon} d\xi 
\\
& + \sum_{j=1}^{2m_0-2+[\frac{m_0}{2}]} \tilde{a}_{2j-1}(\varepsilon, t) 
\int_{-1}^1 \xi^{\frac{2j-1}{2}} (1-\xi^2)^{2m_0} e^{-\I x \xi \varepsilon} d\xi.
\end{aligned}
\end{equation*}
Each term of the first sum can be integrated by parts at least $2m_0-1$ times, moreover, all  integrands are equal to zero. All the summands of the second sum, except for the first one,  can be integrated by parts until we get the integral 
\begin{equation} \label{eq3.26}
\begin{aligned}
&J\left(x, 2 m_{0}\right) = \int_{-1}^{1} \xi^{\frac{1}{2}}\left(1-\xi^{2}\right)^{2 m_{0}} \mathrm{e}^{-\mathrm{i} \xi x \varepsilon} d \xi= \frac{1}{2}\mathcal B(\nu, \rho)\ _1F_{2}\bigg(\nu, \frac{1}{2}, \nu+\rho, \frac{\mu^{2}}{4}\bigg)\\
&+\frac{\mu}{2} \mathcal B\bigg(\nu+\frac{1}{2}, \rho\bigg)\ _1F_{2}\bigg(\nu+\frac{1}{2}, \frac{3}{2}, \nu+\rho+\frac{1}{2}, \frac{\mu^{2}}{4}\bigg) +\frac{\I}{2} \mathcal B(\nu, \rho)\ _1F_{2}\bigg(\nu, \frac{1}{2}, \nu+\rho, \frac{\mu^{2}}{4}\bigg)\\
&-\frac{\mu \mathrm{i}}{2} \mathcal B\bigg(\nu+\frac{1}{2}, \rho\bigg)\ _1F_{2}\bigg(\nu+\frac{1}{2}, \frac{3}{2}, \nu+\rho+\frac{1}{2}, \frac{\mu^{2}}{4}\bigg),
\end{aligned}
\end{equation}
where $\mu = \I|x\varepsilon|$, $2\nu-1 = \frac{1}{2}$ and $\rho = 2m_0+1$. Here
\begin{equation*}
\mathcal B(\nu, \rho)=\frac{\Gamma(\nu) \Gamma(\rho)}{\Gamma(\nu+\rho)}
\end{equation*}
is the beta function, $\Gamma(\gamma)$ is the gamma function and $_1F_{2}(a, b, c, -z)$ is the generalized hypergeometric function. Using its asymptotic behavior for large  $|z|$  we get (cf.\ \cite[Equ.\ 16.11.8]{Olver})
\begin{equation} \label{eq3.28}
_1F_{2}(a, b, c,-z)=\frac{\Gamma(b) \Gamma(c)}{\Gamma(a)}\left(H_{1,2}(z)+E_{1,2}\left(z \mathrm{e}^{-\pi \mathrm{i}}\right)+E_{1,2}\left(z \mathrm{e}^{\pi \mathrm{i}}\right)\right)
\end{equation}
where
\begin{equation} \label{eq3.29}
\begin{aligned}
H_{1,2}(z)&=\sum_{k=0}^{c-a-1} \frac{\Gamma(a+k)}{\Gamma(b-a-k) \Gamma(c-a-k)} z^{-a-k} \frac{(-1)^{k}}{k !},
\\
E_{1,2}(z)&=\frac{1}{\sqrt{2 \pi}} 2^{-\alpha-(1 / 2)} \mathrm{e}^{2 z^{1 / 2}} \sum_{n=0}^{\infty} c_{n}\left(2 z^{1 / 2}\right)^{\alpha-n}, \quad \alpha=a-b-c+\frac{1}{2},
\end{aligned}
\end{equation}
and $c_n$ are constants. In \eqref{eq3.26} we substitute $a=\nu$, $b=\frac{1}{2}$, $c=\nu+\rho$ or $a=\nu+\frac{1}{2}$,  $b=\frac{3}{2}$, $c=\nu+\rho+\frac{1}{2}$ and also $\frac{\mu^{2}}{4}=:-z$, $\frac{-\mu \mathrm{i}}{2}=\sqrt{z} \in \R_{+}$. Then $\left(z \mathrm{e}^{\pm \I}\right)^{1 / 2} \in \mathrm{i} \R$, that is, the module of the last two terms in \eqref{eq3.28} containing the exponent is equal to 1. On the other hand,  in (\ref{eq3.29}) either $\alpha = -\rho = -2m_0 -1$ or $\alpha = -\rho - 1$. In any case, the terms containing $E_{1,2}$ can be evaluated as $ \frac{1}{(\varepsilon x)^{m_{0}+1 / 2}}($const.$+o(1))$. Thus, we can discard these terms. Combining the first  and the last summand of (\ref{eq3.26}) (the two other summands give the same function up to the multiplier $\I$) yields
\begin{equation*}
\begin{aligned}
&\frac{1}{2} \mathcal B(\nu, \rho)\ _1F_{2}\bigg(\nu, \frac{1}{2}, \nu+\rho,-z\bigg)& \\
&\quad +z^{1 / 2}\mathcal B\bigg(\nu+\frac{1}{2}, \rho\bigg)\ _1F_{2}\bigg(\nu+\frac{1}{2}, \frac{3}{2}, \nu+\rho+\frac{1}{2},-z\bigg)=:
A_{1}+A_{2},
\\
&_1F_{2}\bigg(\nu, \frac{1}{2}, \nu+\rho,-z\bigg) \sim \frac{\Gamma\left(\frac{1}{2}\right) \Gamma(\nu+\rho)}{\Gamma(\nu)} \sum_{k=0}^{\rho-1} \frac{\Gamma(\nu+k)}{\Gamma\left(\frac{1}{2}-\nu-k\right) \Gamma(\rho-k)} z^{-\nu-k} \frac{(-1)^{k}}{k !},
\end{aligned}
\end{equation*}
that is,
\[
A_{1}=\frac{1}{2} \frac{\Gamma(\nu) \Gamma(\rho)}{\Gamma(\nu+\rho)} \frac{\Gamma\left(\frac{1}{2}\right) \Gamma(\nu+\rho)}{\Gamma(\nu)} \sum_{k=0}^{\rho-1} \frac{\Gamma(\nu+k)}{\Gamma\left(\frac{1}{2}-\nu-k\right) \Gamma(\rho-k)} z^{-\nu-k} \frac{(-1)^{k}}{k !}.
\]
Similarly,
\begin{equation*}
\begin{aligned}
A_{2} &=z^{1 / 2} \frac{\Gamma\left(\nu+\frac{1}{2}\right) \Gamma(\rho)}{\Gamma\left(\nu+\frac{1}{2}+\rho\right)} \frac{\Gamma\left(\frac{3}{2}\right) \Gamma\left(\nu+\frac{1}{2}+\rho\right)}{\Gamma\left(\nu+\frac{1}{2}\right)}
\\
& \qquad \times \sum_{k=0}^{\rho-1} \frac{\Gamma\left(\nu+\frac{1}{2}+k\right)}{\Gamma\left(\frac{3}{2}-\nu-\frac{1}{2}-k\right) \Gamma(\rho-k)} z^{-\nu-\frac{1}{2}-k} \frac{(-1)^{k}}{k !}.
\end{aligned}
\end{equation*}
Summing up these equalities and taking into account that $\nu = \frac{3}{4}$, we get
\begin{equation*}
\begin{aligned}
&A_{1} +A_{2}=\sum_{k=0}^{\rho-1} \frac{(-1)^{k}}{k !} z^{-\nu-k} \frac{\Gamma(\rho) \Gamma\left(\frac{3}{2}\right)}{\Gamma(\rho-k)}\left(\frac{\Gamma(\nu+k)}{\Gamma\left(\frac{1}{2}-\nu-k\right)}+\frac{\Gamma\left(\nu+\frac{1}{2}+k\right)}{\Gamma(1-\nu-k)}\right)+
\\
&+O\big(z^{-\nu-\rho+1}\big) \sum_{k=0}^{\rho-1} \frac{(-1)^{k}}{k !} z^{-\nu-k} \frac{\Gamma(\rho) \Gamma\left(\frac{3}{2}\right)}{\Gamma(\rho-k)}\left(\frac{\pi}{\sin \left(\frac{3 \pi}{4}+\pi k\right)}+\frac{\pi}{\sin \left(\frac{3 \pi}{4}+\frac{\pi}{2}+\pi k\right)}\right)
\\
&+O\big(z^{-\nu-\rho+1}\big)=O\big(x^{-2 m_{0}-1}\big).
\end{aligned}
\end{equation*}
Since $E_{1,2} (ze^{\pm i \pi})$ have the same order, we conclude that 
$J(x, 2m_0) = O(x^{-m_0 - \frac{1}{2}})$. Also
\begin{equation*}
\frac{d}{d x} J(x, 2 m_{0})=\frac{1}{\mathrm{i} \varepsilon x}\left(J(x, 2 m_{0})-4 m_{0} \int_{-1}^{1} \xi^{3 / 2}\left(1-\xi^{2}\right)^{2 m_{0}-1} \mathrm{e}^{-\mathrm{i} \xi(x \varepsilon)} d \xi\right),
\end{equation*}
and the last integral is evaluated as $O\big(x^{-2} J(x, 2 m_{0}-2)\big)$ (plus terms of higher order). Therefore, derivatives of $J(x, 2m_0)$ decrease even faster. Hence the main contribution to $I_2^{\pm}(x, t)$ comes from the term 
$h_{\pm}(x, t)x^{-[\frac{m_0}{2}]}$. That  is why we can integrate $I_1(x, t)$ by parts exactly $[\frac{m_0}{2}]$ times.
The behavior of the integrand as $k_1\to \pm\infty$ after the last integration is evaluated as $k_{1}^{2\left[\frac{m_{0}}{2}\right]-n_{0}-1} f\left(k_{1}\right) \mathrm{e}^{-\mathrm{i} k_{1} x}$, where $f \in L_2(\pm\infty)$. Every differentiation of $I_1(x, t)$ with respect to $x$ will increases the degree $2\left[\frac{m_{0}}{2}\right]-n_{0}-1 $ by 1. To obtain a classical solution we need to differentiate at least 4 times. Thus, we have to require $m_0 - n_0 - 1 \leq -4$, or $n_0\geq m_0 +3$. The following lemma sums up these considerations.
\begin{lemma}
Let $q(x)$ satisfy \eqref{inithor} with $n_0\geq m_0 +3$. Then $F_1(x, t)$ admits the representation as $x\rightarrow -\infty$
\[
F_{1}(x, t)=\frac{H_{1}(x, t)}{ x^{[\frac{m_0}{2}]}}, \quad \frac{\partial^{j} H_{1}(x, t)}{\partial x^{j}} \in L_{2}(-\infty, -a),
\quad j=0, \ldots, n_{0}-m_{0}+1,
\] 
with $a\gg 1$.
\end{lemma}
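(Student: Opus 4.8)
The plan is to read off the lemma from the decomposition of the left kernel established above, keeping careful track of orders and $L_2$-membership. Starting from $F_1(x,t)$ in \eqref{defF1}, I would first dispose of the sum over the discrete spectrum: each summand carries a factor $\E^{\kappa_{j,1}x}$ with $\kappa_{j,1}>0$, hence decays like $\E^{-\kappa_{j,1}|x|}$ as $x\to-\infty$ together with all its $x$-derivatives, and any such contribution is harmlessly absorbed into $H_1(x,t)\,x^{-[\frac{m_0}{2}]}$. What remains is the $R_1$-integral, which I split as $I_1+I_2^++I_2^-$ via the cutoffs $\mathcal B_\pm$ exactly as above, so that the lemma reduces to estimating these three pieces and their $x$-derivatives.

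For the regular piece $I_1(x,t)$ the weight $1-\mathcal B_+-\mathcal B_-$ is smooth and vanishes to order $2m_0-1$ at $k_1=\pm c$, so integration by parts is constrained only by the decay of $R_1$ at infinity recorded earlier. I would integrate by parts exactly $[\frac{m_0}{2}]$ times---matching the order produced by $I_2^\pm$---which yields the prefactor $x^{-[\frac{m_0}{2}]}$ times an integrand of the form $k_1^{2[\frac{m_0}{2}]-n_0-1}f(k_1)\E^{-\I k_1 x}$ with $f\in L_2(\pm\infty)$. By Plancherel this is $L_2$ in $x$, and each $x$-derivative pulls down one factor $-\I k_1$, raising the exponent by one; the derivative of order $j=n_0-m_0+1$ then corresponds to the symbol $k_1^{2[\frac{m_0}{2}]-m_0}f(k_1)$, whose exponent is $\le 0$ and which is therefore still $L_2$, while one further derivative fails. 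This is exactly what pins down the admissible range $0\le j\le n_0-m_0+1$, and the hypothesis $n_0\ge m_0+3$ is precisely the requirement $n_0-m_0+1\ge 4$ that leaves room for the four $x$-derivatives needed to recover a classical KdV solution.

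For the singular pieces I would treat $I_2^+$ and invoke symmetry for $I_2^-$. Expanding $R_1(c+z^2)$ in the local parameter $z=\sqrt{k_1-c}$, the integer-power terms give smooth integrands against $\mathcal B$ with vanishing boundary terms, so they integrate by parts freely and land in the remainder $h_+(x,t)\,x^{-[\frac{m_0}{2}]}$, where $h_+$ is smooth with all $x$-derivatives in $L_2(\R_-)$; these terms thus impose no constraint on the derivative range. The genuinely singular half-integer terms reduce, after integration by parts, to the model integral $J(x,2m_0)$ of \eqref{eq3.26}, whose hypergeometric evaluation via \eqref{eq3.28}--\eqref{eq3.29} gives $J(x,2m_0)=O(x^{-m_0-\frac12})$, with the recursion for $\frac{d}{dx}J(x,2m_0)$ showing that each $x$-derivative decays faster still. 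Since $m_0+\frac12>[\frac{m_0}{2}]$, this whole contribution is of strictly lower order than the target power. Collecting the $h_\pm$ terms as the leading $x^{-[\frac{m_0}{2}]}$ behavior and all the rest (the post-integration $I_1$, the $J$-terms, the exponential discrete part) as equal- or lower-order remainders with the same $L_2$-properties, I obtain $F_1(x,t)=H_1(x,t)\,x^{-[\frac{m_0}{2}]}$ with $\partial_x^j H_1\in L_2(-\infty,-a)$ for $0\le j\le n_0-m_0+1$.

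The main obstacle, and the only step that is not routine integration by parts plus Plancherel, is the treatment of $I_2^\pm$ near the branch points $k_1=\pm c$: the square-root singularity of the right Jost solution makes $R_1$ only finitely (about $[\frac{m_0}{2}]$-times) differentiable there, so one must both isolate the singular half-integer contribution as the explicit integral $J(x,2m_0)$ and verify, via the ${}_1F_2$-asymptotics, that its decay rate $O(x^{-m_0-\frac12})$---and the faster decay of its derivatives---genuinely beats $x^{-[\frac{m_0}{2}]}$. Everything else is bookkeeping of powers of $k_1$ and applications of Plancherel.
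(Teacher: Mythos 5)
Your route is the same as the paper's: discard the exponentially small discrete part, split the $R_1$-integral into $I_1+I_2^++I_2^-$ with the cutoffs $\mathcal B_\pm$, integrate $I_1$ by parts $[\frac{m_0}{2}]$ times with the symbol bookkeeping $k_1^{2[\frac{m_0}{2}]-n_0-1+j}f(k_1)$, and dispose of the explicit half-integer monomials through the model integral $J(x,2m_0)$ of \eqref{eq3.26} and the ${}_1F_2$-asymptotics \eqref{eq3.28}--\eqref{eq3.29}. Your treatment of $I_1$, including the derivative count $j\le n_0-m_0+1$ and the origin of the hypothesis $n_0\ge m_0+3$, matches the paper and is correct.

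There is, however, a genuine gap in your treatment of $I_2^\pm$. Because $\mathcal R$ is only finitely smooth at $k_1=\pm c$, its expansion in the local parameter $z=\sqrt{k_1-c}$ is not a full series that splits into integer and half-integer powers; it is a \emph{finite} Taylor polynomial plus a remainder,
\begin{equation*}
\mathcal R(c+z^2)=a_0(t)+a_1(t)z+\dots+a_{m_0-1}(t)z^{m_0-1}+\beta(z,t),\qquad \beta(z,t)=O(z^{m_0}),
\end{equation*}
and this remainder $\beta$ is covered by neither of your two mechanisms: it is not smooth (so it cannot be integrated by parts ``freely'' like the integer-power terms), and it is not an explicit monomial (so the hypergeometric evaluation does not apply to it). In the paper's proof, $\beta$ is exactly the source of the term $h_+(x,t)x^{-[\frac{m_0}{2}]}$: as a function of $k_1$ it has only $[\frac{m_0}{2}]$ derivatives, so one integrates it by parts precisely $[\frac{m_0}{2}]$ times (the boundary terms vanish thanks to $\mathcal B_+$), and Plancherel applied to the resulting compactly supported integrand gives $h_+$ smooth in $x$ with all derivatives in $L_2(\R_-)$. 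This is the \emph{dominant} contribution of $I_2^\pm$ and the reason the lemma's rate is $x^{-[\frac{m_0}{2}]}$ at all. Your proposal instead attributes $h_+$ to the integer-power Taylor terms, which in fact decay like $x^{-(2m_0-1)}$ (they admit $2m_0-1$ integrations by parts with vanishing boundary terms) and are negligible. With $\beta$ omitted, your accounting would suggest $I_2^\pm=O(x^{-m_0-\frac12})$ -- a rate the argument does not establish -- and, more to the point, the step that actually produces the claimed representation $F_1=H_1(x,t)x^{-[\frac{m_0}{2}]}$ is never carried out. To close the gap, insert the remainder term explicitly, integrate it by parts $[\frac{m_0}{2}]$ times, and verify its $L_2$-properties as in the paper.
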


From Lemma \ref{got} we obtain
\begin{lemma}
Let $q(x)$ satisfy \eqref{inithor} with $n_0\geq m_0 +3$. Then $F(x, t)$ admits the representation
as $x\rightarrow +\infty$
\[
F(x, t)=\frac{H(x, t)}{ x^{[\frac{m_0}{2}]}}, \quad \frac{\partial^{j} H(x, t)}{\partial x^{j}} \in L_{2}(a, +\infty), 
\quad j=0, \ldots, n_{0}-m_{0}+1,
\]
with $a\gg 1$.
\end{lemma}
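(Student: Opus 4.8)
The plan is to read the statement off directly from Lemma~\ref{got} by specializing its three free parameters. Since $q$ satisfies \eqref{inithor}, the hypothesis of Lemma~\ref{got} holds with $s=m_0$ and $l=n_0$. I would then invoke its conclusion with the choice $m=[\frac{m_0}{2}]$, which yields
\begin{equation*}
F(x,t)=\frac{D(x,t)}{x^{[\frac{m_0}{2}]}}+D_1(x,t),\qquad x\to+\infty,
\end{equation*}
where $\frac{\pa^j D}{\pa x^j}\in L_2(a,+\infty)$ for $0\leq j\leq n_0-2[\frac{m_0}{2}]+1$ and $\frac{\pa^i D_1}{\pa x^i}=O(\E^{-\frac c2 x})$ for all $i\geq0$, uniformly on compact $t$-intervals.

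First I would check that this specialization is admissible. The constraint $m\leq s-1$ reads $[\frac{m_0}{2}]\leq m_0-1$, which holds for $m_0\geq2$; and the constraint $l\geq 2m+3$ reads $n_0\geq 2[\frac{m_0}{2}]+3$, which follows from the standing assumption $n_0\geq m_0+3$ together with the elementary bound $2[\frac{m_0}{2}]\leq m_0$. That same bound also gives $n_0-2[\frac{m_0}{2}]+1\geq n_0-m_0+1$, so the range of $L_2$-derivatives of $D$ supplied by Lemma~\ref{got} already covers the range $0\leq j\leq n_0-m_0+1$ demanded in the statement.

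It then remains to absorb the exponentially small remainder into a single prefactor. Setting
\begin{equation*}
H(x,t):=D(x,t)+x^{[\frac{m_0}{2}]}D_1(x,t),
\end{equation*}
gives $F(x,t)=H(x,t)/x^{[\frac{m_0}{2}]}$. By the Leibniz rule, $\frac{\pa^j}{\pa x^j}\big(x^{[\frac{m_0}{2}]}D_1\big)$ is a finite sum of terms $x^{[\frac{m_0}{2}]-i}\frac{\pa^{j-i}D_1}{\pa x^{j-i}}$, each of which is $O\big(x^{[\frac{m_0}{2}]-i}\E^{-\frac c2 x}\big)$ and hence lies in $L_2(a,+\infty)$ for $a\gg1$, since any polynomial times $\E^{-\frac c2 x}$ is square integrable near $+\infty$. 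Combining this with the already established membership $\frac{\pa^j D}{\pa x^j}\in L_2(a,+\infty)$ for $0\leq j\leq n_0-m_0+1$ yields $\frac{\pa^j H}{\pa x^j}\in L_2(a,+\infty)$ in the claimed range, which completes the argument. I do not expect a genuine obstacle here: the entire substance sits in Lemma~\ref{got}, and the only point to verify beyond bookkeeping of indices is the harmless fact that the exponentially decaying tail $D_1$ survives multiplication by the polynomial $x^{[\frac{m_0}{2}]}$ without leaving $L_2$.
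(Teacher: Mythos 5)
Your proposal is correct and follows exactly the paper's route: the paper derives this lemma directly from Lemma~\ref{got} (``From Lemma~\ref{got} we obtain''), and you simply make explicit the intended specialization $s=m_0$, $l=n_0$, $m=[\frac{m_0}{2}]$ together with the harmless absorption of the exponentially small term $D_1$ into $H$. The index check $n_0-2[\frac{m_0}{2}]+1\geq n_0-m_0+1$ and the Leibniz-rule argument are exactly the bookkeeping the paper leaves to the reader.
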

Since $\frac{1}{x} \frac{\partial^{j} H(x, t)}{\partial x^{j}} \in L_{1}(a,+\infty) $ and $ \frac{1}{x} \frac{\partial^{j} H_{1}(x, t)}{\partial x^{j}} \in L_{1}(-a, -\infty)$, both $F_x^{(j)}(x, t)$ and $F_{1, x}^{(j)}(x, t)$ satisfy condition {\bf IV} with $m_0(t) = [\frac{m_0}{2}]-1$ and $n_0(t) = n_0 - m_0$. Naturally, we assume that $m_0(t)\geq 1$, i.e., $m_0\geq 4$. This proves Theorem~\ref{thoern}. 

\noprint{

It remains to show the case $m_0 =3$ and $n_0\geq 6$ of Theorem \ref{thoern}.
Properties {\bf I.}--{\bf III.} can be verified as above. Moreover,
\begin{equation*}
\begin{aligned}
&F_{x}^{(j)}(\cdot, t) \in L^{1}\left(\mathbb{R}_{+}\right) \cap
L_{loc}^{1}(\mathbb{R}), \quad F_{1, x}^{(j)}(\cdot, t) \in L^{1}\left(\mathbb{R}_{-}\right) \cap L_{loc}^{1}(\mathbb{R}), \quad j=0, \dots, 4,
\\
&\frac{\partial}{\partial t} F(\cdot, t) \in L^{1}\left(\mathbb{R}_{+}\right) \cap L_{\operatorname{loc}}^{1}(\mathbb{R}), \quad \frac{\partial}{\partial t} F_{1}(\cdot, t) \in L^{1}\left(\mathbb{R}_{-}\right) \cap L_{\mathrm{loc}}^{1}(\mathbb{R}).
\end{aligned}
\end{equation*}
We need to find a monotonous majorant for $|F(x, t)|$ and $|F_1(x, t)|$. For $F(x, t)$ we have shown that $F(x, t) \in L^{1}\left(\mathbb{R}_{+}\right) \cap L_{\operatorname{loc}}^{1}(\mathbb{R})$ has $4$ derivatives. Having $6$ derivatives and $3$ moments of the initial data allows us to integrate by parts twice. Differentiating by $x$ yields
\begin{equation*}
\frac{\partial}{\partial x} F(x, t)=-\frac{1}{2 \pi x^{2}}\left(\mathrm{i} \int_{-\infty}^{\infty} k \frac{\partial^{2}(R \psi)}{\partial k^{2}} e^{\mathrm{i} k x} d k+2 \int_{c / 2}^{0} h \frac{\partial^{2}(P \psi)}{\partial h^{2}} e^{-h x} d h\right)+O\bigg(\frac{1}{x^{3}}\bigg),
\end{equation*}
where the term $O(x^{-3})$ contains the summands corresponding to the discrete spectrum. Since $k \frac{\partial^{2}(R \psi)}{\partial k^{2}} = O\left(k^{-2}\right)$ as $k\rightarrow \infty$ and the 
second derivatives of $P$ and $R$ are continuous (recall that the initial data has  $3$ finite moments), 
we have that $x \frac{\partial}{\partial x}F(x, t) \in L^1(\R_+)$. Similarly, $\frac{\pa}{\pa x}I_1(x, t)$ has the first finite  moment. To estimate $I_2^\pm(x,t)$ in \eqref{eq3.25} we will use the following
\begin{lemma} \label{lem3.6} (\cite{Stein}, corollary on p. 334 )
Let $g$ be a smooth real function defined on $(a, b)$ and $|g''(k)|\geq 1$ for all $k \in (a, b)$. Then for $y>0$
\begin{equation*}
\left|\int_{a}^{b} \mathrm{e}^{\mathrm{i} y g(k)} \Psi(k) d k\right| \leq C \frac{1}{\sqrt{y}}\left[|\Psi(b)|+\int_{a}^{b}\left|\Psi^{\prime}(k)\right| d k\right]
\end{equation*}
and the constant $C$ does not depend on $g$ or $y$.
\end{lemma}
We consider $I_2^+(x, t)$ from (\ref{eq3.25}) and denote
\begin{equation*}
\widetilde{\Psi}(k_{1}):=R_{1}(\lambda) B_{+}(k_{1}), \quad \widetilde{g}(k_{1})=k_{1}+v(3 c^{2} k_{1}-2 k_{1}^{3}), \quad v:=\frac{4 t}{x}.
\end{equation*}
\todo[inline, color=blue!10 ]{It should be $\widetilde{g}(k_{1})=k_{1}+v(- 3 c^{2} k_{1} + 2 k_{1}^{3})$.\\

}
Note that $v$ is small for $t \leq \mathcal{T}$ and $x \rightarrow -\infty$. We differentiate $\hat{g}(k)=\widetilde{g}(k_{1})$ twice with respect to $k = \sqrt{k_1^2 -c^2}$ and evaluate it near $k=0$,
\begin{equation*}
\frac{d \hat{g}}{d k}=\frac{k}{k_{1}}\big(1+v\big(3 c^{2} k_{1}-6 k_{1}^{3}\big)\big), \quad \frac{d^{2} \hat{g}}{d k^{2}}=\frac{1+v\big(3 c^{2} k_{1}-6 k_{1}^{3}\big)}{k_{1}}+O(k).
\end{equation*}
Since in $I_2^+(x, t)$ we have $|k|< C \sqrt{\varepsilon}$ and $k_{1} \geq \pm c-\varepsilon$, $v$ is small, there exists a constant $0<\mathcal{K}<\frac{4}{c}$, such that $\frac{d^{2} \hat{g}}{d k^{2}}>\mathcal{K}>0$ inside the area of integration. Rewriting $I_2^+(x, t)$ yields
\begin{equation*}
\begin{aligned}
I_{2}^{+}(x, t)&=\int_{-\varepsilon+c}^{\varepsilon+c} \widetilde{\Psi}(k_{1}) \mathrm{e}^{-\I x \widetilde{g}(k_{1})} d k_{1}\\
&=\int_{\left[\mathrm{i} \kappa_{\varepsilon}, 0\right] \cup\left[0, \kappa_{\varepsilon}\right]} \frac{\widetilde{\Psi}\left(k_{1}\right)}{1+v(3 c^{2} k_{1}-6 k_{1}^{3})} \hat{g}^{\prime}(k) \mathrm{e}^{-\I x \hat{g}(k)} d k
\end{aligned}
\end{equation*}
\todo[inline, color=blue!10 ]{It should be $\kappa_{\varepsilon}$, not $\I \kappa_{\varepsilon}$, in the boundary of the integral.
\\\ NO, it should be $\i\kappa_\epsilon$, because $k$ in pure imaginary when $c-\epsilon<k_1<c$.}
where $\kappa_{\varepsilon} = \sqrt{2c\varepsilon + \varepsilon^2}$. We can integrate this integral by parts. The properties of $B_+$ ensure the disappearance of terms outside the integral,
\be \label{I_2 final}
I_{2}^{+}(x, t)=\frac{1}{\mathrm{i} x} \int_{\left[\I \kappa_{\varepsilon}, 0\right] \cup\left[0, \kappa_{\varepsilon}\right]} \Psi(k) \mathrm{e}^{\mathrm{i} y g(k)} d k,
\ee
where
\begin{equation*}
\Psi(k):=\frac{d}{d k} \frac{R_{1}(\lambda) B_{+}\left(k_{1}\right)}{1+v(3 c^{2} k_{1}-6 k_{1}^{3})}, \quad y=-\mathcal{K} x, \quad g(k)=\mathcal{K}^{-1} \hat{g}(k).
\end{equation*}
We know that if $m \geq 3$, $R_1(\la)$ has two continuous derivatives by $k$ in the neighborhood of $k_1 = c$.
The other functions included in $\Psi(k)$ have an infinite number of derivatives, and they are bounded.
Changing to the variable $\xi = -\mathrm{i} k$ in \eqref{I_2 final} and using Lemma~\ref{lem3.6} gives 
us the required estimate $|I_{2}^{+}(x, t)| \leq const \ |x|^{-3 / 2}$. Similarly, $|I_{2}^{-}(x, t)| 
\leq const \ |x|^{-3 / 2}$. 
\todo[color=blue!10 ]{$\mathcal{L}_3^6 (0, -c^2)$ is not defined. }
Thereby we proved that if $q(x) \in \mathcal{L}_3^6 (0, -c^2)$, 
there exists a unique solution $q(x, t)$ of the   Cauchy problem for KdV with initial data $q(x)$ such that 
\begin{equation*}
\int_{\mathbb{R}_{+}}\left(|q(x, t)|+|q(-x, t)+c^{2}|\right) d x<\infty, \quad
\frac{\partial q(\cdot, t)}{d t}, \frac{\partial^{j} q(\cdot, t)}{d x^{j}} \in L^{1}(\mathbb{R}), \quad j=1,2,3.
\end{equation*}
This finishes the proof of Theorem~\ref{thoern}.
}

\section{Proof of Theorem \ref{maintheor1}.}
We represent the right Marchenko equation \eqref{March1} in the form (compare \cite{DT})
\[ B(x,y,t) + \hat F(x+y, t) +\int_0^\infty B(x,s,t)\hat F(x+y+s,t)ds=0,
\]
where
\[
B(x,y,t)=2K(x, x+2y,t),\quad \hat F(x,t)= 2F(2x,t).
\]
The kernel $\hat F(x,t)$ consists of three summands,
\begin{align*} \hat F_T(x,t)&=\dfrac{2}{\pi}\int_c^0 P(h) \E^{8 h^3 t -2 h x} dh, \quad
\hat F_d(x,t)= 2\sum_{j=1}^N\gamma_j^2\E^{-2\kappa_j x+8\kappa_j^3 t}, \\
\hat F_R(x,t)&=\dfrac{2}{\pi}\re\int_0^{+\infty} R(k)\E^{8\I k^3 t + 2 \I k x}dk, 
\end{align*}
where $k=\I h$ and $P(h)$ is defined by \eqref{eq3.19}. 
The following result is well known.
\begin{lemma}[\cite{Zakh}] \label{zak}  Let $\delta_{ij}$ be the Kronecker symbol and let $A(x,t)$ be a 
$N\times N$ matrix with elements
\[
A_{ij}(x,t)=\delta_{ij} +\frac{\gamma_j^2\E^{8\kappa_j^3 t}}{\kappa_i +\kappa_j}\E^{-(\kappa_j + \kappa_j)x}.
\]
Let $A^{(j)}(x,t)$ be the matrix obtained from $A(x,t)$ by replacing the $j$-th column of $A$ with 
\[
\begin{pmatrix}
-\gamma_1^2\E^{8\kappa_1^3 t -\kappa_1 x} \\
 \vdots \\
 -\gamma_N^2\E^{8\kappa_N^3 t -\kappa_N x}
\end{pmatrix}
\]
Then the Marchenko equation with kernel $\hat F_d(x,t)$,
\be\label{nuzh}
B_d(x,y,t) + \hat F_d(x+y, t) +\int_0^\infty B_d(x,s,t)\hat F_d(x+y+s)ds=0,
\ee
has a unique solution $B_d(x,y,t)$ such that
\[
B_d(x,0,t)=\frac{1}{\det A(x,t)}\sum_{j=1}^N \det A^{(j)}(x,t)\E^{-\kappa_j x}.
\]
The reflectionless, fast decaying solution $u(x,t)=-\frac{\pa B_d(x,0,t)}{\pa x}$ of the KdV equation associated with \eqref{nuzh} can be expressed as
\[
u(x,t)=-2\frac{\pa^2}{\pa x^2}\log\det A(x,t).
\]
In the domain $x>\varepsilon t$, the following asymptotic is valid as $t\to +\infty$ with $C>0$,
\be\label{solit} 
u(x,t)=-\sum_{j=1}^N\frac{2\kappa_j^2}{\cosh^2\left(\kappa_j x - 4\kappa_j^3 t -\frac{1}{2}\log\frac{\gamma_j^2}{2\kappa_j}-\sum_{i=j+1}^N\log\frac{\kappa_j - \kappa_i}{\kappa_i + \kappa_j}\right)} + O(\E^{-C t}).
\ee
\end{lemma}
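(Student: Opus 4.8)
The plan is to use the fact that $\hat F_d$ is a \emph{degenerate} (finite‑rank) kernel: this collapses the integral equation \eqref{nuzh} to a finite linear system, after which the formula for $u$ follows from the $\tau$‑function identity $u=-2\partial_x^2\log\det A$, and the asymptotics from a subset expansion of $\det A$.

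First I would establish the determinantal formula and unique solvability together. Since $\hat F_d(x+y+s,t)=2\sum_j\gamma_j^2\E^{8\kappa_j^3 t}\E^{-2\kappa_j(x+y+s)}$ separates in $y$ and $s$, I substitute the ansatz $B_d(x,y,t)=\sum_{l=1}^N g_l(x,t)\E^{-2\kappa_l y}$; carrying out the $s$‑integration produces the factors $(\kappa_l+\kappa_j)^{-1}$, and matching the coefficient of each $\E^{-2\kappa_j y}$ turns \eqref{nuzh} into a linear system. After the diagonal conjugation $\mathrm{diag}(\E^{\kappa_j x})$ that moves the $x$‑dependence onto both indices, its coefficient matrix is exactly $A(x,t)$, so Cramer's rule gives each $g_l$ as a ratio of determinants and, summing, yields $B_d(x,0,t)=\det A^{-1}\sum_j\det A^{(j)}\E^{-\kappa_j x}$. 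Unique solvability is equivalent to $\det A(x,t)\neq0$: the same conjugation turns $A$ into $I+B$ with $B_{ij}=\beta_i\beta_j/(\kappa_i+\kappa_j)$, $\beta_j=\gamma_j\E^{4\kappa_j^3 t-\kappa_j x}>0$, and $B$ is positive definite because $\spr{v}{Bv}=\int_0^\infty\big(\sum_i v_i\beta_i\E^{-\kappa_i s}\big)^2\,ds\geq0$ with equality only for $v=0$ (the $\kappa_i$ being distinct). Hence $\det A=\det(I+B)\geq1>0$ for every $(x,t)$.

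For the logarithmic‑derivative formula I would pass to the symmetric Cauchy matrix $\tilde A=I+\tilde C$, $\tilde C_{ij}=\beta_i\beta_j/(\kappa_i+\kappa_j)$, which has the same determinant as $A$. Its decisive feature is that $\partial_x\tilde C=-\mathbf w\mathbf w^T$ is rank one, $\mathbf w=(\beta_i)_i$, so that $\partial_x\log\det A=\mathrm{tr}(\tilde A^{-1}\partial_x\tilde A)=-\spr{\mathbf w}{\tilde A^{-1}\mathbf w}$. Solving the linear system of the previous paragraph by the same resolvent shows $B_d(x,0,t)=-2\spr{\mathbf w}{\tilde A^{-1}\mathbf w}$ (the factor $2$ tracking the substitutions $\hat F=2F(2\cdot)$ and $B_d=2K(x,x+2\cdot)$), whence $B_d(x,0,t)=2\partial_x\log\det A$. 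Since $B_d\to0$ and $\det A\to1$ as $x\to+\infty$ no constant is lost, and one more differentiation gives $u=-\partial_x B_d=-2\partial_x^2\log\det A$; this is the Jacobi identity relating $\sum_j\det A^{(j)}\E^{-\kappa_j x}$ to $2\partial_x\det A$.

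Finally the asymptotics. I expand $\det A=\det(I+\tilde C)=\sum_{S\subseteq\{1,\dots,N\}}\det\tilde C_S$, where each principal minor factors into a positive Cauchy determinant times $\prod_{i\in S}\gamma_i^2\E^{-2\theta_i}$ with $\theta_i=\kappa_i x-4\kappa_i^3 t$. In the region $x>\varepsilon t$ the phases separate at a linear‑in‑$t$ rate: on the ray $x\approx4\kappa_j^2 t$ one has $\theta_i\to-\infty$ for the faster solitons ($i>j$, i.e.\ $\kappa_i>\kappa_j$) and $\theta_i\to+\infty$ for the slower ones. Hence near ray $j$ the two competing subsets are $\{j+1,\dots,N\}$ and $\{j,j+1,\dots,N\}$; factoring out their common dominant exponential and Cauchy prefactor gives $\det A=(\text{pure exponential in }x)\cdot\big[1+\tfrac{\gamma_j^2}{2\kappa_j}\prod_{i=j+1}^N\big(\tfrac{\kappa_i-\kappa_j}{\kappa_i+\kappa_j}\big)^2\E^{-2\theta_j}+O(\E^{-Ct})\big]$. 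The pure‑exponential prefactor is linear in $x$ inside the logarithm, so it is annihilated by $\partial_x^2$, and $-2\partial_x^2\log\big[1+\E^{-2(\theta_j-\Delta_j)}\big]=-2\kappa_j^2\sech^2(\theta_j-\Delta_j)$ with exactly the phase shift $\Delta_j$ of \eqref{deltaj}. Summing over $j$ and covering the between‑ray regions (where every $\theta_j$ is large, so $u=O(\E^{-Ct})$) yields \eqref{solit}.

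The routine steps are the reduction and the two algebraic identities above. The genuine difficulty is the last paragraph: making the subset‑expansion estimate \emph{uniform} over the whole half‑plane $x>\varepsilon t$ with a single rate $C>0$, bounding the neglected subsets (especially those mixing fast and slow solitons) uniformly against the dominant balance, verifying that the pure‑exponential prefactor contributes nothing to $\partial_x^2\log\det A$, and reading off $\Delta_j$ correctly from the Cauchy cross‑terms.
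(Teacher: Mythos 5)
Your proposal cannot be checked against a proof in the paper, because the paper does not prove Lemma~\ref{zak}: it is imported from the literature (\cite{Zakh}, cf.\ also \cite{Kh}) and only its conclusions are used. Measured against that classical derivation, your argument is the standard one and is essentially correct: the separable kernel collapses \eqref{nuzh} to an $N\times N$ linear system, whose unique solvability follows from positive definiteness of the Gram-type matrix ($\spr{v}{Bv}=\int_0^\infty\big(\sum_i v_i\beta_i\E^{-\kappa_i s}\big)^2ds$, hence $\det A\geq 1$); Jacobi's formula combined with the rank-one derivative $\partial_x\tilde C=-\mathbf w\mathbf w^T$ gives the tau-function identity; and the principal-minor (Cauchy-determinant) expansion, with the two competing subsets $\{j+1,\dots,N\}$ and $\{j,\dots,N\}$ near the ray $x=4\kappa_j^2t$, produces \eqref{solit}. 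You also correctly locate where the real work lies: uniformity of the expansion over $x>\varepsilon t$, which rests on the fact that at any such point at most one phase $\theta_j=\kappa_j(x-4\kappa_j^2t)$ can fail to be of size proportional to $t$.

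Two bookkeeping corrections. First, with the kernel normalized as in \eqref{nuzh} (note the factor $2$ in $\hat F_d$), Cramer's rule yields $B_d(x,0,t)=2\partial_x\log\det A$, which is \emph{twice} the determinant ratio displayed in the lemma; already for $N=1$ the displayed ratio is $-\gamma_1^2\E^{8\kappa_1^3t-2\kappa_1 x}/\det A$ while the actual solution is $-2\gamma_1^2\E^{8\kappa_1^3t-2\kappa_1 x}/\det A$. Thus the lemma's three displayed identities are mutually inconsistent by a factor $2$, and your write-up asserts both the Cramer ratio and $B_d(x,0,t)=-2\spr{\mathbf w}{\tilde A^{-1}\mathbf w}$ without reconciling them. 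Keep your version: it is the one compatible with $u=-\partial_x B_d=-2\partial_x^2\log\det A$ and with \eqref{solit}, and drop the claim that Cramer reproduces the displayed ratio verbatim. Second, a sign slip in attribution: the shift produced by your two-term bracket is $\delta_j=\frac12\log\frac{\gamma_j^2}{2\kappa_j}+\sum_{i=j+1}^N\log\big|\frac{\kappa_i-\kappa_j}{\kappa_i+\kappa_j}\big|$, which is $-\Delta_j$ in the notation of \eqref{deltaj}; the argument of the resulting $\sech^2$ is $\theta_j-\delta_j=\theta_j+\Delta_j$, exactly as in \eqref{solit}, so calling your shift ``the $\Delta_j$ of \eqref{deltaj}'' inside $\sech^2(\theta_j-\Delta_j)$ reverses its sign (harmless in substance, since your coefficient $\rho_j$ is correct, but worth fixing; note that the paper's own formulas need absolute values here, as $\kappa_j<\kappa_i$ for $i>j$).
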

Our aim is to prove that the main contribution to $\hat F(x,t)$ in the region 
\be\label{mathd}\mathcal O:=\left\{ (x,t): t\geq \mathcal T,\  x\geq 4c^2 t + \log t^{\frac{m_0-3/2 -\varepsilon}{c}}\right\},
\ee  
stems from $\hat F_d(x,t)$.  To this end we have to estimate $\hat F_T(x+y,t) +\hat F_R(x+y,t)$ for $(x,t)\in \mathcal O$, $y\geq 0$.
  Since $t$ is arbitrary large, we  cannot use 
integration by parts as in the previous section. Denote  
\[
\aligned x=4c^2 t& +\frac{m_0-\frac 3 2 -\varepsilon}{c}\log t + \xi,\quad  r=2(\xi+y),\\
  S(r,t,h)&:=8 h^3 t - 8hc^2 t -h\frac{2m_0-3-2\varepsilon}{c}\log t - h r.
  \endaligned\]
Note that for $t \geq \mathcal T$, $r\geq 0$, and $0\leq h\leq \frac{c}{2}$,
\[
\aligned S_h(r,t,h):=\frac{\pa S(r,t,h)}{\pa h}&=24h^2 t   -8c^2 t- 2\frac{m_0-\frac 3 2 -\varepsilon}{c}\log t - r<0, \\
\left|\frac{1}{S_h(r,t,h)}\right|&\leq \frac{1}{2c^2 t + r +1},
\endaligned
\] 
where $\mathcal T\geq\E^{\frac{c}{2m_0-3-2\varepsilon}}$.
 Split the integral in $\hat F_T(x+y,t)$ into 
two parts, 
\begin{align} \label{razbi}
F^{(1)}(x+y,t)&=\frac{2}{\pi}\int_c^{c/2} P(h) \E^{S(r,t,h)} dh, \\ \label{rzd}
F^{(2)}(x+y,t)&=\frac{2}{\pi}\int_{c/2}^0 P(h)\E^{S(r,t,h)}dh= \frac{2}{\pi}
\int_{c/2}^0 \frac{P(h)}{S_h(r,t,h)}d\E^{S(r,t,h)}.
\end{align}
Since
\[ S(r,t,h)\leq -\Big(m_0-\frac 3 2 -\varepsilon\Big)\log t -\frac{c}{2}r,\quad \mbox{for}\ h\in [c/2, c],
\]
then
\be\label{estkl}\aligned | F^{(1)}(x+y,t)|&\leq \frac{2}{\pi} \frac{1}{t^{m_0-\frac 3 2 -\varepsilon}}\E^{-\frac{c}{2} (\xi +y)}\int^c_{c/2}|P(h)|d h,\\
\left| \frac{\pa}{\pa x} F^{(1)}(x+y,t)\right|&\leq \frac{2}{\pi} \frac{1}{t^{m_0-\frac 3 2 -\varepsilon}}\E^{-\frac{c}{2} (\xi +y)}\int^c_{c/2}|h P(h)|d h.\endaligned
\ee 
On the other hand, we observe that $\psi(r,t,h):=\frac{1}{S_h(r,t,h)}$  satisfies \eqref{eq3.20}. An elementary analysis shows that 
\be\label{estpsi1} 
\left|\frac{\pa^n}{\pa h^n}\psi(r,t,h)\right|\leq C|\psi(r,t,h)| \leq 
C\begin{cases} \begin{array}{ll}\frac{1}{2c^2t + r +1}, & h\in [0, \frac{c}{2}],\\
\frac{1}{24 k^2 t + 8c^2 t + r + 1}, & k=\I h\in \R_+.\end{array}\end{cases}
\ee
The positive constant $C$ does not depend on $t$ and $r$. Let us simultaneously integrate both integrals in the sum 
\be\label{11}\aligned I(x+y, t):=& \frac{\pi}{2}\left(F^{(2)}(x+y,t) + \hat F_R(x+y,t)\right)\\
=&\int_{c/2}^0 P(h)\psi(r,t,h)d\E^{S(r,t,h)} +\re\int_0^\infty R(k)\psi(r,t,-\I k) d\E^{S(r,t,-\I k)} \endaligned\ee  by parts $m_0$-times, using each time the representation
\[\E^{S(r,t,h)}dh=\psi(r,t,h) d \E^{S(r,t,h)}, \quad -\I \E^{S(r,t,-\I k)}dk= \psi(r,t,-\I k) d \E^{S(r,t,-\I k)}.
\] 
Taking into account \eqref{eq3.20}, \eqref{eq3.22}, \eqref{estpsi1} we get 
\[ \aligned
I(x+y, t)&=\E^{S(r,t,c/2)} \mathcal P_1\Big(r,t,\frac{c}{2}\Big)
+ \int_c^{c/2} \mathcal P_2(r,t,h)\left(\psi(r,t,h)\right)^{m_0-1} \\ 
&\quad +\re\int_0^\infty \mathcal P_3(r,t,-\I k)
\big(\psi(r,t,-\I k)\big)^{m_0-1},
\endaligned\]
where the functions $\mathcal P_i(r,t,h)$ are uniformly bounded with respect to all arguments as $t\geq\mathcal T$, $r\geq 0$, $h\in [0, \frac c 2]$ and $k\in \R_+$.
Using again estimate \eqref{estpsi1} and inequality  $S(r,t,\frac c 2)< -7c^3 t - \frac{ c }{2} r$ we obtain
\be\label{estm1}\aligned |I(x+y,t)|& \leq C  \left(\E^{-7c^3t}\E^{-\frac{c r}{2}} + \int_{c/2}^c \frac{dh}{\left(2c^2 t + r + 1\right)^{m_0-1}}\right.\\
&\left. \qquad +\int_{\R_+}\frac{dk}{\left(24k^2 t + 8c^2 t + r +1\right)^{m_0-1}}\right).
\endaligned \ee

For the next step recall Young's inequality: For all $u>0, v>0$, $p>1$, $q>1$, such that $\frac{1}{p}+\frac{1}{q}=1$, one has  
\[ uv\leq \frac{u^p}{p} +\frac{v^{q}}{q}.
\]
In the last integral in \eqref{estm1} we choose 
 $u^p=p\cdot (24 k^2 t +8c^2 t)$, $v^q=q \cdot(r+1)$. In the first integral $v$ is the same and $u^p=p\cdot 2c^2 t$. 
Then
\be\label{13}\aligned  |I(x+y,t)|&\leq C\left(\E^{-7c^3t}\E^{-\frac{c r}{2}} +\frac{c}{2(2pc^2)^{\frac{m_0-1}{p}}\cdot q^{\frac{m_0-1}{q}}}\frac{1}{ t^{\frac{m_0-1}{p}} (r+1)^{\frac{m_0-1}{q}}}\right.
\\ &+\left.\frac{1}{p^{\frac{m_0-1}{p}}\cdot q^{\frac{m_0-1}{q}}} \frac{1}{t^{\frac{m_0-1}{p}} (r+1)^{\frac{m_0-1}{q}}}\int_{\R_+}\frac{d k}{ (24 k^2 + 8 c^2)^{\frac{m_0-1}{p}}}\right).\endaligned
\ee
To achieve convergence in the last integral we have to require $m_0-1>\frac{p}{2}$. In later 
estimates we need the property $(r+1)^{-\frac{m_0-1}{q}}\in L_2(\R_+)$, which implies $m_0-1>\frac{q}{2}$. 
Set
\[ \frac{1}{q}=\frac{\frac 1 2 +\varepsilon}{m_0-1}, \quad \ \frac 1 p= 1 -\frac{1}{q},\quad \mbox{that is,} \quad  \frac{m_0-1}{p}=m_0-\frac 3 2 - \varepsilon,
\]
where $\varepsilon>0$ is arbitrary small. Combining this with \eqref{11} and \eqref{13} implies
\be\label{nui}\big|F^{(2)}(x+y,t) + \hat F_R(x+y,t)\big|\leq \ti C\frac{1}{t^{m_0-\frac 3 2 -\varepsilon}}\ \frac{1}{(\xi + y+1)^{\frac 1 2 +\varepsilon}},\ee
where $\ti C>0$ does not depend on $x,y,t$ in the region 
\[t\geq\mathcal T, \quad x\geq 4c^2 t +\frac{m_0-\frac 3 2-\varepsilon}{c}\log t,\quad y\geq 0.
\] 
Repeating almost literally the arguments above  for $\frac{\pa}{\pa x}(F^{(2)}(x+y,t) + \hat F_R(x+y,t))$ 
we get the estimate
\be\label{nui1}
\left|\frac{\pa}{\pa \xi}(F^{(2)}(x+y,t) + \hat F_R(x+y,t))\right|\leq \hat C\frac{1}{t^{m_0-\frac 3 2 -\varepsilon}}\ \frac{1}{(\xi + y+1)^{\frac 1 2 +\varepsilon}}.
\ee
Combining  \eqref{razbi}--\eqref{estkl} with \eqref{nui}, \eqref{nui1}  we finally obtain that
\begin{align}\label{mainest} 
& \big|\hat F_T(x+y,t) + \hat F_R(x+y,t)\big|\leq C\frac{1}{t^{m_0-\frac 3 2 -\varepsilon}}\ \frac{1}{(\xi + y+1)^{\frac 1 2 +\varepsilon}}, \\
\label{mainest1}
& \left|\frac{\pa}{\pa \xi}\big(\hat F_T(x+y,t) + \hat F_R(x+y,t)\big)\right|\leq C\frac{1}{t^{m_0-\frac 3 2 -\varepsilon}}\ \frac{1}{(\xi + y+1)^{\frac 1 2 +\varepsilon}}.
\end{align}
 
 Consider the space of functions $\varphi(y) \in L_2(\R_+) \cap C(\R_+)$ with the norm $\| \varphi \| = \| \varphi \|_{L_2} + \| \varphi\|_{C}$. In this space introduce the operators 
\[
\aligned \relax[ \mathcal F\varphi ](y) &=\int_{0}^{\infty} \hat F_d\Big(4c^2t +\frac{m_0-\frac 3 2 -\varepsilon}{c}\log t+ \xi+y+s, t\Big) \varphi(s) d s, \\
[\mathcal G \varphi](y) &=\int_0^{\infty} G\Big(4c^2t +\frac{m_0-\frac 3 2 -\varepsilon}{c}\log t + \xi+y+s, t\Big) \varphi(s) ds,
\endaligned 
\]
where  $G(x,t)=\hat F_T(x, t) +\hat F_R(x, t)$.
The Marchenko equation  \eqref{nuzh} can be represented as  
\begin{equation} \label{eq4.3}
\varphi+\F \varphi+\mathcal G \varphi=\omega+g,
\end{equation}
where 
\begin{align*}
\omega&=\omega(y)=-\hat F_d\Big(4c^2 t +\frac{m_0-\frac 3 2 -\varepsilon}{c}\log t+\xi+y, t\Big),\\
 g&=g(y)=-\mathcal G\Big(4c^2 t+\frac{m_0-\frac 3 2 -\varepsilon}{c}\log t+\xi+y, t\Big), \\
\varphi &=\varphi(y)=B\Big(4 c^{2} t+\frac{m_0-\frac 3 2 -\varepsilon}{c}\log t+\xi, y, t\Big).
\end{align*}
The operators $\F$, $\mathcal G$ and the functions $\omega$, $g$, $\varphi$ depend on $\xi$ and $t$ as parameters. By \eqref{mainest} and \eqref{mainest1} we have 
\begin{equation} \label{eq4.5}
\|G\|+\left\|G_{\xi}\right\| \leq C t^{-(m_0 - 3/2-\varepsilon)}, \quad \|g\|+\left\|g_{\xi}\right\| \leq  C t^{-(m_0 - 3/2-\varepsilon)}, 
\end{equation}
 where $G_{\xi} = \frac{dG}{d \xi}$, $g_{\xi} = \frac{dg}{d \xi}$, and the norm is taken in 
$L_2(0, \infty) \cap C[0, \infty)$. 

Lemma \ref{zak} has the following corollary (cf.\ \cite{Kh}).
\begin{corollary}\label{cor}
The operator $\id + \F$ in the spaces $L_2(\R_+)$ and $ L_2(\R_+) \cap C[\R_+)$ is invertible,  and the norms of the operators $\RR = (\id + \F)^{-1}$ and $\RR_{\xi} = \frac{d\RR}{d \xi}$ are bounded uniformly with respect to $t$ and $\xi \geq 0$.
\end{corollary}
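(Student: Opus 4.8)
The key structural fact to exploit is that $\F$ has a degenerate kernel of rank $N$. Writing $\mathbf e_j(y)=\E^{-2\kappa_j y}$ and, with $X:=4c^2t+\frac{m_0-3/2-\varepsilon}{c}\log t$,
\[
b_j=b_j(t,\xi)=2\gamma_j^2\,\E^{8\kappa_j^3t-2\kappa_j(X+\xi)}>0,\qquad j=1,\dots,N,
\]
the definition of $\hat F_d$ gives $\hat F_d(X+\xi+y+s,t)=\sum_{j=1}^N b_j\,\mathbf e_j(y)\mathbf e_j(s)$, so that $\F=E^{*}\Theta E$, where $\Theta=\operatorname{diag}(b_1,\dots,b_N)$, $(E\varphi)_j=\int_0^\infty \mathbf e_j(s)\varphi(s)\,ds$ and $E^{*}\mathbf v=\sum_j v_j\mathbf e_j$. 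Since $\Theta\ge0$, the operator $\F$ is non-negative on $L_2(\R_+)$, hence $\id+\F\ge\id$ is invertible with $\|\RR\|_{L_2\to L_2}\le1$, uniformly in $t,\xi$. Moreover $EE^{*}=P$ is the fixed, symmetric, positive-definite Cauchy matrix $P_{ij}=\frac{1}{2(\kappa_i+\kappa_j)}=\langle\mathbf e_i,\mathbf e_j\rangle$, and the elementary mapping bounds $\|E\|_{L_2\to\C^N},\,\|E^{*}\|_{\C^N\to L_2}\le\|P\|^{1/2}$ and $\|E^{*}\mathbf v\|_{C}\le\sum_j|v_j|\le\sqrt N\,\|\mathbf v\|$ hold.

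First I would pass to the finite-dimensional level by the Sherman--Morrison--Woodbury identity, which gives the closed form
\[
\RR=(\id+E^{*}\Theta E)^{-1}=\id-E^{*}S E,\qquad S:=\big(P+\Theta^{-1}\big)^{-1}.
\]
Invertibility on $L_2(\R_+)$ and on $L_2(\R_+)\cap C[0,\infty)$ is then automatic (either from $\F\ge0$, or because $P+\Theta^{-1}$ is positive definite), in agreement with the unique solvability furnished by Lemma~\ref{zak}, i.e.\ $\det(I+P\Theta)\neq0$. The entire argument now rests on one observation: since $\Theta^{-1}\ge0$ we have $0<S\le P^{-1}$, so $\|S\|\le\|P^{-1}\|$ is bounded by a constant depending only on the fixed discrete data $\{\kappa_j\}$, hence uniformly in $t$ and $\xi$. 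Feeding this into $\RR=\id-E^{*}SE$ and using the mapping bounds gives at once
\[
\|\RR\omega\|_{C}\le\|\omega\|_{C}+\sqrt N\,\|P^{-1}\|\,\|P\|^{1/2}\,\|\omega\|_{L_2},
\]
and together with $\|\RR\omega\|_{L_2}\le\|\omega\|_{L_2}$ this yields the claimed uniform bound for $\RR$ in the combined norm.

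For the $\xi$-derivative, since $E$ and $E^{*}$ are $\xi$-independent I would simply differentiate the closed form, $\RR_\xi=-E^{*}S_\xi E$. As $b_j^{-1}\propto\E^{2\kappa_j\xi}$ we have $\tfrac{d}{d\xi}\Theta^{-1}=2\,\Gamma\Theta^{-1}$ with $\Gamma:=\operatorname{diag}(\kappa_1,\dots,\kappa_N)$, and differentiating $S=(P+\Theta^{-1})^{-1}$ yields
\[
S_\xi=-S\Big(\tfrac{d}{d\xi}\Theta^{-1}\Big)S=-2\,S\Gamma\Theta^{-1}S .
\]
The apparent danger here is the factor $\Theta^{-1}$, which blows up as $\xi\to\infty$; it is removed by writing $\Theta^{-1}=S^{-1}-P$, whence $S\Gamma\Theta^{-1}S=S\Gamma-S\Gamma P S$ and therefore $\|S_\xi\|\le 2\|P^{-1}\|\,\|\Gamma\|\big(1+\|P\|\,\|P^{-1}\|\big)$, again uniformly in $t,\xi$. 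The $L_2$- and $C$-estimates for $\RR_\xi=-E^{*}S_\xi E$ then follow from the same mapping bounds on $E,E^{*}$ as above.

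The single genuine obstacle is that the coefficients $b_j$ are \emph{not} bounded on the region $\mathcal O$ of \eqref{mathd}: since $\kappa_j>c$ one has $b_j\asymp\E^{8\kappa_jt(\kappa_j^2-c^2)}\to\infty$ as $t\to\infty$, so that $\|\F\|$, $\|\Theta\|$ and $\|(I+P\Theta)^{-1}\|$ all grow without bound and no naive submultiplicative estimate can produce a uniform resolvent bound. The uniformity is recovered solely through the cancellations encoded in the identity $S=(P+\Theta^{-1})^{-1}\le P^{-1}$, which simultaneously absorbs the $t\to\infty$ growth of $\Theta$ and, via $\Theta^{-1}=S^{-1}-P$, the $\xi\to\infty$ growth of $\Theta^{-1}$; isolating and exploiting this identity is the heart of the proof.
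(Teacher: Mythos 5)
Your proof is correct, and it takes a route the paper does not actually spell out: for Corollary~\ref{cor} the paper gives no argument at all, presenting it as a consequence of Lemma~\ref{zak} with a pointer to Khruslov \cite{Kh}, i.e.\ to the classical treatment in which uniform solvability of the degenerate-kernel Marchenko equation is extracted from the explicit determinant (Cramer) formulas built on the matrix $A(x,t)$. You use the same rank-$N$ reduction $\F=E^{*}\Theta E$, but then obtain the uniformity abstractly rather than through determinants: positivity $\Theta\geq 0$ gives $\id+\F\geq\id$, hence $\|\RR\|_{L_2\to L_2}\leq 1$; the Woodbury identity $\RR=\id-E^{*}SE$ with $S=(P+\Theta^{-1})^{-1}$ reduces the $L_2\cap C$ statement to the $N\times N$ matrix $S$; anti-monotonicity of matrix inversion gives $\|S\|\leq\|P^{-1}\|$, a bound depending only on the $\kappa_j$; and the substitution $\Theta^{-1}=S^{-1}-P$ removes the $\xi\to\infty$ blow-up of $\Theta^{-1}$ in $S_\xi=-2S\Gamma\Theta^{-1}S=-2\left(S\Gamma-S\Gamma PS\right)$. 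I checked the key identities and bounds ($EE^{*}=P$, the mapping bounds for $E$ and $E^{*}$, the Woodbury formula, the differentiation of $S$); they are all correct, and your closing remark --- that the $b_j$ blow up on the region \eqref{mathd} as $t\to\infty$ because $\kappa_j>c$, so no submultiplicative estimate on $\F$ itself can be uniform --- is exactly the right diagnosis of where the content of the corollary lies. What your argument buys is a short, self-contained proof of precisely the uniform-resolvent claim, which the paper delegates to a citation; what the classical determinant route buys is the explicit formula for $B_d(x,0,t)$, which the paper needs anyway for the soliton asymptotics \eqref{solit}, with determinant lower bounds playing the role of your monotonicity inequality. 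One small precision: positivity of $\F$ alone yields invertibility only in the Hilbert space $L_2(\R_+)$; on the Banach space $L_2(\R_+)\cap C[0,\infty)$ one really needs the explicit inverse $\id-E^{*}SE$, which visibly preserves that space, so in your ``either/or'' parenthetical only the second alternative does the job --- but since you work with the explicit formula throughout, nothing is actually missing.
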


From this corollary it follows that \eqref{eq4.3} can be rewritten as
\begin{equation*}
\varphi+\RR \mathcal G \varphi=\RR \omega+\RR g,
\end{equation*}
where the operator $\RR \mathcal G$ is small in the norm of $ L_2(\R_+) \cap C(\R_+)$ for large $t$.
Applying successive approximation we get
\[
\varphi=\RR  \omega+\RR  g-\sum_{n=1}^{\infty}(\RR \mathcal G)^{n}[\RR  \omega+\RR  g].
\]
Differentiating  by $\xi$ yields
\begin{equation}\label{eq4.9}
\begin{aligned}
\frac{d}{d \xi} \varphi&=\frac{d}{d \xi}(\RR \omega)+\RR _{\xi} g +\RR  g_{\xi}\\
&\quad -\sum_{n=1}^{\infty} \sum_{i=0}^{n-1}(\RR  G)^{i}\left(\RR _{\xi} G+\RR  G_{\xi}\right)
(\RR  G)^{n-1-i}[\RR  \omega+\RR  g]
\\
&\quad -\sum_{n=1}^{\infty}(\RR  G)^{n}\left[\frac{d}{d \xi}(\RR  \omega)+\RR _{\xi} g+\RR  g_{\xi}\right].
\end{aligned}
\end{equation}
Taking into account estimates (\ref{eq4.5}) and Corollary \ref{cor} we obtain that the series on the r.h.s. of \eqref{eq4.9} converge in $L_2(\R_+) \cap C(\R_+)$ uniformly with respect to  $t$.
From (\ref{eq4.5}) and \eqref{eq4.9} it also follows that
\[
\frac{d}{d \xi} \varphi \Big|_{y=0}=\frac{d}{d \xi}\left(\RR \omega\right)\Big|_{y=0}+O\big(t^{-(m_0-3/2-\varepsilon)}\big).
\]
Therefore, the main contribution in the asymptotics of the solution $q(x,t)$ of the initial value problem \eqref{kdv}, \eqref{inithor} is given by the solution of \eqref{nuzh}. Together with  \eqref{solit} this finishes the  proof of Theorem \ref{maintheor1}.

\end{document}